\documentclass[11pt,a4paper]{article}
\usepackage[utf8]{inputenc}

\usepackage{amsmath}
\usepackage{amsfonts}
\usepackage{amssymb}
\usepackage{mathrsfs}
\usepackage{amsthm}
\usepackage{parskip}

\usepackage{csquotes}

\usepackage{theoremref}

\usepackage[english]{babel}

\usepackage[left=2cm,right=2cm,top=3cm,bottom=2cm]{geometry}

\usepackage{biblatex}
\addbibresource{biblitex.bib}

\author{Gisel Mattar Marriaga}
\title{The Principal Symbol Map for\\ Lagrangian Distributions with Complex Phase}
\date{{\small Mathematical Institute\\  University of Göttingen}}

\newcommand{\R}{\mathbb{R}}
\newcommand{\C}{\mathbb{C}}
\newcommand{\smooth}{\mathscr{C}^\infty} 
\newcommand{\Z}{\mathbb{Z}}


\newcommand{\supp}[1]{\operatorname{supp}(#1)}

\newcommand{\LR}[1]{\left(#1\right)} 
\newcommand{\wf}[1]{\operatorname{WF}\left(#1\right)} 


\theoremstyle{plain}
\newtheorem{thm}{Theorem}[section]
\newtheorem{lemma}[thm]{Lemma}

\newtheorem{prop}[thm]{Proposition}

\theoremstyle{definition}
\newtheorem{defi}[thm]{Definition}

\newtheorem{ass}[thm]{Assumption}
\newtheorem{rem}[thm]{Remark}

\numberwithin{equation}{section}

\begin{document}
\maketitle

\begin{abstract}
Fourier integral operators with complex phase function are an important tool in the analysis of partial differential equations. A rigorous study of these operators can be found in  \cite{melin1975fourier}. There, the authors give a rough description of the principal symbol map  of a Lagrangian distribution after transverse composition. The present paper provides an alternative construction of the principal symbol map, which allows us to compute the principal symbol after clean composition. 
\end{abstract}

\section{Introduction}This paper studies Lagrangian distributions with complex phase functions. These are distributions $A\in\mathcal{D}(X,\Omega^{\frac{1}{2}})$, which are microlocally of the form 
\begin{align}\label{OscInt}
\int e^{i\phi(x,\theta)}a(x,\theta)\ d\theta,\quad (x,\theta)\in X\times\R^n,
\end{align}
and the phase function $\phi$ is assumed to have non-negative imaginary part, $\Im \phi\geq 0$. In particular, we are concerned with the principal symbol of such distributions, and the composition  of Fourier integral operators under the assumption of clean intersection.

Specifically, our main theorem provides an explicit description of the principal symbol $\sigma^m(A)$ of a Lagrangian distribution $A\in I^m_{\textup{cl}}(X,\Lambda,\Omega^{\frac{1}{2}})$.
\begin{thm}\thlabel{PS2}
 Let $A=I(\phi,a)$, with $\phi(x,\theta)$ a complex-valued non-degenerate phase function and $a(x,\theta)\in S^{m+(n-2N)/4}_{\textup{cl}}$. Then, the principal symbol $ \sigma^m(A)$ of $A\in I^m_{\textup{cl}}(X,\Lambda,\Omega^{\frac{1}{2}})$ defines a section of the virtual line bundle $\mathscr{L}$ over $\Lambda$. In admissible local coordinates, $\sigma^m(A)$ takes the form
\begin{align}
    \sigma^m(A)\sim\widetilde{a_0} \sqrt{d\phi}\in S^{(m+n/4)}(\Lambda,\mathscr{L}),
\end{align}
where $a_0$ is the top order term of the asymptotic expansion of $a$.
\end{thm}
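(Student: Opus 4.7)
The plan is to establish the formula by verifying three things: (i) that the local expression $\widetilde{a_0}\sqrt{d\phi}$ makes sense as a section of $\mathscr{L}$ in every admissible chart, (ii) that it transforms correctly under change of admissible parametrization so that the section is intrinsic to $A$, and (iii) that the lower-order terms in the asymptotic expansion of $a$ contribute only to $I^{m-1}_{\textup{cl}}$ and hence do not affect the principal symbol class.

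First I would fix an admissible phase $(\phi,a)$ around a point $\lambda_0\in\Lambda$. Non-degeneracy of $\phi$ makes the critical set $C_\phi=\{d_\theta\phi=0\}$ a smooth submanifold (after passing to almost-analytic extensions in the complex-phase setting), and the map $\iota_\phi(x,\theta)=(x,d_x\phi)$ identifies $C_\phi$ with $\Lambda$ microlocally. The factor $\widetilde{a_0}$ is then simply the pullback of the leading term $a_0$ of the classical expansion of $a$ along this identification, while $\sqrt{d\phi}$ is built from the transverse Hessian of $\phi$ along the fibre directions, interpreted through the almost-analytic extension of $\phi$. Together these produce a local section of $\mathscr{L}=\Omega^{1/2}_\Lambda\otimes M_\Lambda$, the tensor product of half-densities with the appropriate Maslov factor.

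The main obstacle is step (ii). Given two admissible phase functions $\phi_1,\phi_2$ parametrizing the same germ of $\Lambda$ at $\lambda_0$, I would first equalize the number of fibre variables by adding non-degenerate quadratic forms, and then invoke the complex equivalence-of-phase-functions theorem to write $\phi_2\equiv \phi_1\circ\Psi$ modulo terms vanishing to infinite order on $C_{\phi_1}$. Plugging this into the oscillatory integral and applying complex stationary phase in the new fibre variables produces the transformation law of the leading symbol. The delicate point is the branch of the square root: because only $\Im\phi\geq 0$, the Hessian is complex-symmetric with a signature that is well-defined only after almost-analytic extension, and the discrepancy between the branches chosen for $\phi_1$ and $\phi_2$ is precisely the Maslov cocycle defining the virtual line bundle $\mathscr{L}$. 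Tracking this phase consistently across charts is where most of the work lies.

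Finally, step (iii) is a routine stationary phase computation: each $a_j$ with $j\geq 1$ gives rise to an oscillatory integral of order at most $m-j$, so it lies in $I^{m-1}_{\textup{cl}}$ and vanishes in the principal symbol class. Combining (i)--(iii) yields both the well-definedness of $\sigma^m(A)$ as a section of $\mathscr{L}$ and the explicit formula $\sigma^m(A)\sim \widetilde{a_0}\sqrt{d\phi}$ in admissible local coordinates.
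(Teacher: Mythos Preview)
Your strategy is the classical H\"ormander route: check that $\widetilde{a_0}\sqrt{d\phi}$ is chart-independent by running the equivalence-of-phase-functions argument and tracking the Maslov cocycle through step~(ii). In principle this can be made to work. The paper, however, takes a different and shorter path. It pairs $A$ with a test function $v=e^{-it\psi}u$ (with $\psi$ real, $\psi''_{xx}<0$, and $\psi'_x(x_0)=\phi'_x(x_0,\theta_0)$), applies the complex stationary phase formula of \thref{spf} to $(I(\phi,a),v)_{L^2}$, and reads off the leading term as $\widetilde{a_0}\,\widetilde u\,\sqrt{d\phi}$. Rather than verifying invariance under change of phase directly, it then checks that this section agrees with $\mathcal P^{-1}(A)$, where $\mathcal P$ is the Melin--Sj\"ostrand bijection that \emph{already} defines the principal symbol in their framework. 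So the paper outsources all of your step~(ii) bookkeeping to the existing construction of $\mathscr L$ and of $\mathcal P$ in \cite{melin1975fourier}, whose invariance is established there; what remains is a short comparison argument.

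Two remarks on your version. First, writing $\mathscr L=\Omega^{1/2}_\Lambda\otimes M_\Lambda$ is the real-phase picture; here $\mathscr L$ is only a \emph{virtual} line bundle, built from admissible coordinate systems and the transition functions $\mathbf S_{\lambda,\mu}$, and $\Lambda$ itself is only an equivalence class of almost analytic manifolds, so the identification you rely on in step~(i) needs more care than you indicate. Second, and more substantively, the theorem as stated in the paper asserts that the formula computes \emph{the} principal symbol, i.e.\ the one defined by Melin--Sj\"ostrand via $\mathcal P^{-1}$. Your argument would show that $\widetilde{a_0}\sqrt{d\phi}$ is a well-defined section of $\mathscr L$, but would not by itself identify it with $\mathcal P^{-1}(A)$; that identification is precisely the content the paper is after, and your outline does not address it.
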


Additionally, we consider Fourier integral operators with complex phase, that is operators with distributional kernel in the class $I^m_{\textup{cl}}(X,\Lambda,\Omega^{\frac{1}{2}})$ of Lagrangian distributions with complex phase. Furthermore, we study to  the composition of two such operators under the assumption of \emph{clean intersection} and compute the principal symbol of the resulting distribution. Then, the second main result of the paper is given by the following theorem.

\begin{thm}\thlabel{main}
Let $A_1\in I^{m_1}_{\textup{cl}}(X\times Y, C_1';\Omega^{\frac{1}{2}})$, $A_2\in I^{m_2}_{\textup{cl}}(Y\times Z, C_2';\Omega^{\frac{1}{2}})$ be such that the clean composition $B=A_1\circ A_2$ defines a distribution in  $I^{m_1+m_2+e/2}_{\textup{cl}}(X\times Z, (C_1\circ C_2)';\Omega^{\frac{1}{2}})$. Then, \[\sigma(B)^{m+e/2}\sim\int_{C_\gamma} \left((a_1)_0(a_2)_0(\theta^2+\sigma^2)^{\frac{-n_Y}{2}} \sqrt{d\Phi}\right)\ d\omega''\in S^{(m-e/2+n/4)}(\Lambda,\mathscr{L}),\] with $m=m_1+m_2$, $n=n_X+n_Z$, $e$ is the excess of the intersection and $C_\gamma$ is the compact image of a point $\gamma\in (C_1\circ C_2)_\R$ in $(C_{1\R}\times C_{2\R})\cap D$. 
Here  $(a_1)_0$, $(a_2)_0$ are the principal parts of the amplitudes of $A_1$ and $A_2$, respectively.
\end{thm}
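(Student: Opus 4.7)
The plan is to reduce the computation of $\sigma(B)$ to an application of \thref{PS2} by producing, from the composition, a \emph{non-degenerate} complex phase function for the composed canonical relation $C_1\circ C_2$, and then tracking how the top-order symbol and the half-density $\sqrt{d\Phi}$ transform under that reduction. Fix admissible oscillatory integral representations $A_j=I(\phi_j,a_j)$ with fiber variables $\theta^1\in\R^{N_1}$ and $\theta^2\in\R^{N_2}$. The Schwartz kernel of $B$ is formally
\begin{align*}
    K_B(x,z)=\int e^{i\Phi(x,z,y,\theta^1,\theta^2)}\, a_1(x,y,\theta^1)\,a_2(y,z,\theta^2)\,dy\,d\theta^1\,d\theta^2,
\end{align*}
with total phase $\Phi=\phi_1+\phi_2$ and total fiber variables $(y,\theta^1,\theta^2)\in\R^{n_Y+N_1+N_2}$. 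Under the clean composition hypothesis, $\Phi$ is a \emph{clean} complex phase function parametrizing $C_1\circ C_2$ with excess $e$; its real critical set $C_{\Phi,\R}$ is a bundle over $(C_1\circ C_2)_\R$ whose fibers are precisely the compact sets $C_\gamma$ appearing in the statement.

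Next, I would reduce $\Phi$ to a non-degenerate phase at the cost of an additional integration over the excess fiber. Near a point of $C_{\Phi,\R}$ one can split the fiber variables as $(\omega',\omega'')$ with $\omega''\in\R^e$ so that, for each $\omega''$ close to the base point, $\omega'\mapsto\Phi(x,z,\omega',\omega'')$ is a non-degenerate complex phase on $X\times Z$ parametrizing a piece of $C_1\circ C_2$. In the real case this is the Morse lemma with parameters adapted to clean phase functions; in the complex-phase setting the splitting is performed on an almost-analytic extension of $\Phi$ and transferred back, in the same spirit as the equivalence-of-phase-function arguments of Melin--Sjöstrand. After this splitting, $K_B$ is represented by an oscillatory integral with non-degenerate complex phase $\Phi(\,\cdot\,,\omega')$ depending smoothly on the parameter $\omega''$ and amplitude $a_1 a_2\,d\omega''$; the $e/2$ shift in symbol order is exactly the loss produced by carrying $e$ of the original fiber variables inside the amplitude.

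Having brought the integral into the framework of \thref{PS2}, I would then read off the principal symbol of $B$ at a point $\gamma\in(C_1\circ C_2)_\R$ by applying that theorem for each value of $\omega''$ and integrating over the compact fiber $C_\gamma$. This yields the stated formula with the product $(a_1)_0(a_2)_0$, the half-density $\sqrt{d\Phi}$ provided by \thref{PS2}, and the factor $(\theta^2+\sigma^2)^{-n_Y/2}$ arising from the Gaussian contribution of the $y$-Hessian of $\Phi$ in the complex stationary phase formula, where $\theta,\sigma$ are the real and imaginary parts of the critical covector along $Y$. Invariance of the resulting object as a section of $\mathscr{L}$ over $C_1\circ C_2$ is inherited from the invariance built into \thref{PS2}, while independence of the splitting $(\omega',\omega'')$ is a routine change-of-variables argument on the excess fiber.

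The main technical obstacle is the clean-to-non-degenerate reduction of $\Phi$ in the almost-analytic category. In the real clean case one diagonalizes the transverse Hessian by a smooth change of fiber coordinates; here the reduction has to be carried out modulo functions vanishing to infinite order on the real critical set of the almost-analytic extension, and then descended back to $\Phi$ itself without destroying the equivalence class of phase functions. Verifying that this reduction respects $\sqrt{d\Phi}$, produces exactly the factor $(\theta^2+\sigma^2)^{-n_Y/2}$, and pushes the flat density $d\omega''$ forward to an intrinsic density on $C_\gamma$, is the delicate step on which the rest of the argument depends.
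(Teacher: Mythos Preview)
Your overall strategy---split the fiber variables into $(\omega',\omega'')$, apply the non-degenerate principal-symbol formula for each fixed $\omega''$, then integrate over the compact excess fiber $C_\gamma$---is essentially what the paper does, though the paper has already packaged these steps into earlier auxiliary results (\thref{Dphi-clean}, \thref{ps-clean-rem}, \thref{PScomp}), so that the proof of the main theorem reduces to a two-line homogeneity count.

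However, there is a concrete error in your identification of the factor $(\theta^2+\sigma^2)^{-n_Y/2}$. You write that it ``aris[es] from the Gaussian contribution of the $y$-Hessian of $\Phi$ in the complex stationary phase formula, where $\theta,\sigma$ are the real and imaginary parts of the critical covector along $Y$.'' This is not where it comes from, and $\theta,\sigma$ are not real and imaginary parts of anything. In the paper's conventions $\theta=(\xi,\eta)\in\R^{N_1}$ and $\sigma=(\eta,\zeta)\in\R^{N_2}$ are the \emph{fiber variables} of the two oscillatory integrals $I(\phi_1,a_1)$ and $I(\phi_2,a_2)$. The factor $(\theta^2+\sigma^2)^{-n_Y/2}$ appears in the amplitude $b=a_1a_2(\theta^2+\sigma^2)^{-n_Y/2}$ of $B$ as the Jacobian of the substitution $\omega(y,\theta,\sigma)=\bigl(y(\theta^2+\sigma^2)^{1/2},\theta,\sigma\bigr)$, which is made so that the total fiber variable $\omega$ is homogeneous of degree $1$ and $b$ satisfies genuine symbol estimates in $(x,z,\omega)$. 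If you instead tried to extract this factor from a stationary-phase Hessian in $y$, you would double-count: the $y$-contribution is already absorbed into $\sqrt{d\Phi}$ via the block $\Phi''_{yy}$ in \eqref{dphi-clean}.

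A second, smaller point: the ``main technical obstacle'' you flag---an almost-analytic Morse-lemma reduction of the clean phase to a non-degenerate one---is not needed. The paper simply observes (proof of \thref{Dphi-clean}) that for each fixed $\omega''$ the function $(x,\omega')\mapsto\Phi(x,\omega',\omega'')$ is already a non-degenerate phase of positive type, so \thref{Dphi} applies directly with $\omega''$ as a smooth parameter. No change of fiber coordinates or diagonalization of a transverse Hessian is required; the splitting $\omega=(\omega',\omega'')$ is just the one for which the differentials $d(\partial\Phi/\partial\omega'_j)$ are independent on $C_{\Phi\R}$, which exists by definition of a clean phase.
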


The reader familiar with the real-valued theory, where the phase function is assumed to be non-degenerate in the sense of Hörmander \cite{Hormander1971FIOsI}, might recognize the above description of the principal symbol. Yet, the meaning is slightly different, because of the geometrical objects involve. When $\phi$ is real-valued, $\Lambda\subset(T^*X\setminus 0)$ is a Lagrangian manifold that can be parametrized by the phase function $\phi$, and $\mathscr{L}$ is the tensor product of the so-called Maslov line bundle and the space of half-densities $\Omega^{\frac{1}{2}}$.  However, this cannot be directly carried to the complex domain. For this paper, we followed an approach proposed by \citeauthor{melin1975fourier} in  \cite{melin1975fourier} and \cite{melin1976parametrices}. The authors use \emph{almost analytic extensions} to define $\Lambda$ as an abstract, complex object called \emph{positive Lagrangian manifold}, even tough it is not a Lagrangian manifold in the usual sense. In the same way, $\mathscr{L}$ is only a virtual line bundle over $\Lambda$. Thus, here $S^{(d)}(\Lambda,\mathscr{L})$, $d=d_1+d_2$, denotes the space of almost analytic functions $f\sim bs$, such that $s$ is a section $\mathscr{L}$  which homogeneous of degree $d_1$, and $b$ is an homogeneous function of degree $d_2$ in $\theta$. 

It should be noted that, working with almost analytc extensions introduces some errors, and the constructions are highly technical. For this reason, the authors in \cite{melin1975fourier} define the principal symbol of a Lagrangian distribution $A\in I^m_{\textup{cl}}(X,\Lambda,\Omega^{\frac{1}{2}})$ as the pre-image of $A$ under a, rather complicated, bijection $\mathcal{P}$, that maps  $\Gamma^{m+n/4}(\Lambda,\mathscr{L})$, the space of homogeneous sections of $\mathscr{L}$, into the set of oscillatory integrals \eqref{OscInt} with amplitude $a$ equal to some function defined on $\C^N\times\C^n$ and homogeneous of degree $m$.

To improve this description, we extend to the complex-valued case, the method by \citeauthor{duistermaat1973fourier} in \cite{duistermaat1973fourier} to compute the principal symbol in the real-valued theory. First, the leading order term of the asymptotic expansion gets identified with the principal symbol of a given Lagrangian distribution. Then, we show that the resulting expression defines a section of $\mathscr{L}$, which is equivalent to the section $\mathcal{P}^{-1}(A)$. This description allows us to see that, similar to the real-valued case, the principal symbol map $\sigma^m$ fits into a short exact sequence
\begin{align*}
0\to I^{m-1}_{\textup{cl}}(X,\Lambda;\Omega^{\frac{1}{2}}) \to I^m_{\textup{cl}}(X,\Lambda;\Omega^{\frac{1}{2}})\xrightarrow{\sigma^{m}}S^{(m+n/4)}(\Lambda,\mathscr{L}) \to 0.
\end{align*}

Since the publication of \cite{melin1975fourier} and \cite{melin1976parametrices}, other authors have contribute to the theory. For instance, in \cite{hormander2009analysis} \citeauthor{hormander2009analysis} proposes an alternative approach, using \emph{complex Lagrangian ideals} instead of almost analytic extensions. He also proves necessary and sufficient conditions for an operator of order zero to be $L^2$ continuous \cite{hormander19832}. In \cite{treves1980introduction}, \citeauthor{treves1980introduction} shows a general method for solving complex eikonal equations, and thus finding the phase function of the solution operators associated to hyperbolic problems. But, to the best of our knowledge, there is no explicit formulation of the principal symbol of a Lagrangian distribution with complex phase in the literature. We were also not able to find any mention of complex-valued clean phase functions or clean composition. With this paper, we attempted to fill these gaps. 

The present document is divided into three sections. The first of them consist of an overview of the theory of Lagrangian distributions with complex phase. We collect, without proofs, the results from \cite{melin1975fourier} that are necessary to achieve our goals. In the next section, we present our construction of the principal symbol map (\thref{PS2}), which is complementary to the one in \cite{melin1975fourier}. In the last section, we consider the case of \emph{clean composition} and proof our main result, \thref{main}.

\section{Previously known results}In this section, a summary of the theory of Lagrangian distributions with complex phase is presented. This a broad theory, so we focused on the definitions and theorems necessary to understand our main results. The content is completely taken from \cite{melin1975fourier}, but some of the statements have been reformulated to facilitate the reading process. For a complete presentation, the interested reader can see the original document \cite{melin1975fourier}. For those interested in the theory due its applications, \cite{treves1980introduction} is a good source. There, a nice exposition of the theory can be found, however, it does not include the construction of the principal symbol map. 

We begin by clarifying some notation and recalling the almost analytic machinery. Let $\overline{\partial}_z$ denote the Cauchy-Riemann operator in $\C$. If $f$ is a smooth function in $\C^n$, denote by $\overline{\partial}f$ the sum
\[\overline{\partial}f=\sum \overline{\partial}_{z_j}f\ d\bar{z}_j.\]

\begin{defi}
Let $\Omega\subseteq \C^n$ be an open set. We say that:
\begin{enumerate}
    \item $f\in\smooth(\Omega)$ is almost analytic if $\overline{\partial}f$ vanish to the infinite order near $\Omega_\R=\Omega\cap\R^n.$ This means that, for all $z\in\C^n$, there exist a constant $C_N>0$ such that
    \[|\overline{\partial}f(z)|\leq C_N |\Im z|^N,\ \forall N\in\mathbb{Z}_+.\]
    \item $f_1,f_2\in\smooth(\Omega)$ are equivalent if $f_1-f_2$ is almost analytic. In this case, we write $f_1\sim f_2$.
    \item An almost analytic extension of a function $f\in\smooth(\Omega_\R)$, is an almost analytic function $\widetilde{f}\in\smooth(\Omega)$ such that $\widetilde{f}\vert_{\Omega_\R}=f$.
\end{enumerate}
\end{defi}

As the equivalence relation suggests, when working with almost analytic extensions, we care about the equivalence class, not about any particular representative. It may be useful to think about this as the germs, in the sense of algebraic topology, of complex-valued functions functions at real points.

It is not hard to see that every $f\in\mathscr{S}(\R)$ admits an almost analytic extension, see for example \cite[Theorem 3.6]{zworski2012semiclassical}. In \cite{melin1975fourier}, a more general result is proven: Given a function $a\in S^m_{\textup{cl}}(\Gamma)$, defined in a conic set $\Gamma\subseteq \R^n\times(\R^N\setminus 0)$, there exists a unique, up to equivalence, almost analytic extension $\Tilde{a}\in S^m_{\textup{cl}}(\widetilde{\Gamma})$. Here, and in the rest of the document, $\widetilde{\Gamma}$ denotes a complexification of the real set $\Gamma$.

\begin{defi}
Let $\Omega\subseteq \C^n$ be an open set and $M\subseteq \Omega$ a closed submanifold of real dimension $2k$. We say that $M$ is an almost analytic manifold if for every real point $z_0\in M$, one can find an open neighborhood $\mathcal{O}$ of $z_0$ in $\Omega$ and almost analytic functions $f_{k+1},\dots,f_n$ such that, in $\mathcal{O}$,
\begin{itemize}
    \item $M$ is defined by $f_{k+1}=\dots=f_n=0$,
    \item and the differentials $\partial f_{k+1}(z),\dots,\partial f_n (z)$ are linearly independent over $\C.$
\end{itemize}
\end{defi}

The following theorem gives some useful description of almost analytic manifolds. 
\begin{thm}\thlabel{AAMequiv}
Let $\Omega\subseteq \C^n$ be an open set and $M\subseteq \Omega$ an almost analytic manifold. Then, for every real point $z_0\in M$, one can find a neighborhood $\mathcal{O}=\mathcal{O}'\times \mathcal{O}''\subseteq \C^k\times\C^{n-k}$ of $z_0$ in $M$ and an almost analytic function $h$ on $\mathcal{O}'$ such that, for all $z=(z',z'')\in \mathcal{O},$ $z''=h(z').$
\end{thm}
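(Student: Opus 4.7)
The plan is to establish an almost-analytic version of the implicit function theorem: reduce to the smooth implicit function theorem and then verify that the resulting implicit function is almost analytic.

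Write $F = (f_{k+1},\ldots,f_n) \colon \Omega \to \C^{n-k}$. The hypothesis that $\partial f_{k+1}(z_0),\ldots,\partial f_n(z_0)$ are $\C$-linearly independent says the $(n-k)\times n$ matrix $(\partial_{z_j} f_i(z_0))$ has full rank $n-k$, so after relabeling coordinates I may assume the trailing $(n-k)\times(n-k)$ block $\partial_{z''} F(z_0)$ is invertible, where $z = (z', z'') \in \C^k \times \C^{n-k}$. Since $z_0$ is real and $F$ is almost analytic, $\partial_{\overline{z''}} F(z_0) = 0$; hence the total $\R$-linear derivative of $F$ in the $z''$-variables at $z_0$ is the $\R$-linear map underlying the invertible $\C$-linear map $\partial_{z''} F(z_0)$, and is itself invertible as a map $\R^{2(n-k)} \to \R^{2(n-k)}$. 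The classical smooth implicit function theorem then supplies neighborhoods $\mathcal{O}' \subseteq \C^k$ of $z_0'$ and $\mathcal{O}'' \subseteq \C^{n-k}$ of $z_0''$ and a smooth function $h \colon \mathcal{O}' \to \mathcal{O}''$ with $h(z_0') = z_0''$, such that $M \cap (\mathcal{O}' \times \mathcal{O}'') = \{(z', h(z')) : z' \in \mathcal{O}'\}$.

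The substantive point is to check that $h$ is almost analytic. I would differentiate the identity $F(z', h(z')) = 0$ with respect to $\overline{z_j}$ via the Wirtinger chain rule, using $\partial_{\overline{z_j}} \overline{h} = \overline{\partial_{z_j} h}$, to obtain along the graph
\begin{equation*}
    \partial_{\overline{z_j}} F + (\partial_{z''} F)\,\partial_{\overline{z_j}} h + (\partial_{\overline{z''}} F)\,\overline{\partial_{z_j} h} = 0.
\end{equation*}
After shrinking $\mathcal{O}'$ so that $\partial_{z''} F$ remains invertible along the graph, this solves for $\partial_{\overline{z_j}} h$. The almost analyticity of $F$ gives $|\partial_{\overline{z_j}} F|, |\partial_{\overline{z''}} F| \leq C_N |\Im(z', h(z'))|^N$ for every $N$ along the graph, so the right-hand side is of the same order. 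In particular, at the real base point $z_0'$, iterating the differentiation shows that every mixed derivative $\partial^\alpha \partial_{\overline{z}}^\beta h(z_0')$ with $|\beta| \geq 1$ vanishes, so the formal Taylor jet of $h$ at $z_0'$ is holomorphic.

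The hard part will be the bootstrap that promotes $|\partial_{\overline{z_j}} h(z')| \leq C'_N |\Im z'|^N$ from the base point $z_0'$ to a full neighborhood of $z_0'$ in $\R^k$. This is delicate because for a real $x'$ near $z_0'$ the point $(x', h(x'))$ need not be real in $\C^n$; the elementary example $h(z) = e^{iz}$ already shows that an almost analytic function need not take real values on the reals, so $|\Im h(z')|$ cannot in general be bounded by $|\Im z'|$ outright. I would handle this either by a two-scale induction that simultaneously improves the bound on $\overline{\partial} h$ and compares $h$ with an almost analytic extension of $h|_{\R^k}$, or by invoking the uniqueness up to equivalence of almost analytic extensions to replace $h$ by an almost analytic representative agreeing with the smooth implicit function to infinite order on $\R^k$.
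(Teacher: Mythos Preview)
The paper does not supply a proof of this theorem: it appears in the section of ``previously known results'' and is quoted from Melin--Sj\"ostrand \cite{melin1975fourier} without argument. There is therefore no in-paper proof to compare your attempt against; what follows is an assessment of your attempt on its own terms.

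Your reduction to the smooth implicit function theorem is correct, and the Wirtinger chain-rule identity you write down is the right tool. It yields, after inverting $\partial_{z''}F$,
\[
|\overline{\partial}_{z'_j} h(z')|\le C_N\,|\Im(z',h(z'))|^N\qquad\text{for every }N,
\]
and you are right that this is \emph{not} the same as the standard almost-analyticity estimate $|\overline{\partial} h(z')|\le C_N|\Im z'|^N$: for real $x'$ near $z_0'$ the point $(x',h(x'))$ need not be real, so $|\Im h(x')|$ may be positive. This is the genuine crux of the result, and you have located it correctly.

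Where your write-up remains incomplete is in the resolution. Of your two suggestions, the second is the one that actually works and is essentially how Melin--Sj\"ostrand proceed: replace the smooth implicit function $h$ by an almost analytic extension $\tilde h$ of $h|_{\R^k}$. One then checks that $F(z',\tilde h(z'))$ vanishes to infinite order on $\R^k$ (Taylor expand using the estimate above together with $\overline{\partial}\tilde h=O(|\Im z'|^\infty)$), which means the graph of $\tilde h$ is an almost analytic manifold \emph{equivalent} to $M$ in the sense of Definition~\ref{EquivAAMprop}. Note, however, that this does not give the graph of $\tilde h$ literally equal to $M$; the theorem as stated in this paper is slightly imprecise on that point, and in the original source the conclusion is understood modulo the equivalence relation on almost analytic manifolds, which is all the subsequent theory ever uses. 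Your first suggestion (a two-scale bootstrap controlling $|\Im h|$ by $|\Im z'|$) cannot succeed in general, for the reason you yourself give with the $e^{iz}$ example.
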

We also need the notion of equivalent almost analytic manifolds.  This is done \emph{locally}, by considering only neighborhoods of real points. 
\begin{defi}\thlabel{EquivAAMprop}
Let $M_1,M_2\subseteq \Omega \subseteq \C^n$ be almost analytic manifolds of the same dimension with $M_{1\R}=M_{2\R}$. Let $h_1, h_2$ be the defining functions in \thref{AAMequiv}. Then, $M_1\sim M_2$ if any of the following equivalent conditions is satisfied: 
\begin{enumerate}
    \item For all compact set $K\Subset \Omega$ and $N\in\Z_+$ there is a constant $C_{N,K}>0$ such that,
    \[|h_1(x')-h_2(x')|\leq C_{N,K}|\Im h_2(x')|^N,\qquad (x',h_j(x'))\in K,\ x'\in\R^k.\]
    \item For all compact set $K\Subset \Omega$ and $N\in\Z_+$ there is a constant $C_{N,K}>0$ such that,
    \[|h_1(z')-h_2(z')|\leq C_{N,K}|(z',\Im h_2(z'))|^N,\qquad (z',h_j(z'))\in K.\]
\end{enumerate}
\end{defi}

We now consider a key result: a complex-valued version of the so called \emph{stationary phase formula}. Let $F(x,w)\in\smooth(\R^n\times\R^k)$ be defined in a neighborhood of $(0,0)$. Assume that $\Im F\geq 0$ with equality only at the origin, and that
\[\partial_xF(0,0)=0,\qquad \det(\partial^2_xF(0,0))\neq 0.\]
Let $\widetilde{F}(z,\omega)$, $z=x+iy$, $\omega\in\C^k$, be an almost analytic extension to a complex neighborhood of $(0,0)$. Then, it can be shown that the equation $\partial_z\widetilde{F}(z,\omega)=0$ defines an almost analytic manifold $M$ of the form $z=Z(w)$, such that there is exists $C>0$ such that
\begin{align*}
    \Im \widetilde{F}(z,w)\geq C |\Im z|^2,\qquad (z,w)\in M,\ w\in\R^k.
\end{align*}

\begin{thm}\textup{(\cite[Theorem 2.3]{melin1975fourier})}\thlabel{spf}
Let $F$ and $Z$ be as above. Then, there are neighborhoods of the origin $U\in\R^n$, $V\in\R^k$ and differential operators $C_{\nu,w}(D)$, which are $\smooth$ functions of $w\in V$, and have order at most $2\nu$, such that 
\begin{align}\label{stationary-phase}
    \int e^{itF(x,w)}u_t(x)\ dx\sim \sum_{\nu=0}^{\infty} t^{-\nu-n/2}e^{it\widetilde{F}(Z(w),w)}\left(C_{\nu,w}(D)\Tilde{u}_t\right)Z(w), \quad t\to +\infty. 
\end{align}
in $S^{-n/2}_{\textup{cl}}(V\times\R_+)$. Here $u_t(x)\in S^0(\R^n\times \R_+)$ is supported in $U\times\R_+$ and the function $(2\pi)^{-n/2}C_{0,w}$ is the branch of the square root of $ \LR{\det\frac{1}{i}\partial^2_zF(Z(w),w)}^{-1}$ which continuously deform into 1 under the homotopy
\[[0,1]\ni s\mapsto \frac{1}{i}(1-s)\partial^2_zF+s I\in GL(n,\C). \]
\end{thm}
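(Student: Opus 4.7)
The plan is to reduce the complex-phase stationary phase formula to the standard real case by a contour deformation argument, exploiting almost analyticity to control the error, and then to extract the leading coefficient via a Gaussian integral along the shifted contour. The setup already produces the almost analytic critical manifold $M=\{z=Z(w)\}$ and the lower bound $\Im\widetilde{F}\geq C|\Im z|^2$ on $M$, which are the two ingredients that make the deformation viable.

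First I would fix almost analytic extensions $\widetilde{F}$ and $\widetilde{u}_t$ and introduce, for each $w\in V$, the $n$-dimensional real contour $\Gamma_w=\{Z(w)+\xi:\xi\in\R^n\}\subset\C^n$, joined to the original contour $\R^n$ by the homotopy $s\mapsto s\, Z(w)+\xi$, $s\in[0,1]$. Stokes' theorem applied to the differential form $e^{it\widetilde{F}(z,w)}\widetilde{u}_t(z)\,dz_1\wedge\cdots\wedge dz_n$ over the $(n+1)$-chain swept out by the homotopy gives the difference
\begin{align*}
\int_{\R^n}e^{itF(x,w)}u_t(x)\,dx-\int_{\Gamma_w}e^{it\widetilde{F}(z,w)}\widetilde{u}_t(z)\,dz
=\int \overline{\partial}\bigl(e^{it\widetilde{F}}\widetilde{u}_t\,dz\bigr).
\end{align*}
Because $\widetilde{F}$ and $\widetilde{u}_t$ are almost analytic and the chain is confined to $\{|\Im z|\leq |Z(w)|\}$, the integrand vanishes to infinite order at real points, while $\Im F\geq 0$ provides an upper bound $|e^{it\widetilde{F}}|\leq 1$ on the chain; together these yield an $O(t^{-\infty})$ error uniformly for $w$ in a small neighborhood of the origin.

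Next I would analyze the integral over $\Gamma_w$. On this contour $\partial_z\widetilde{F}(Z(w),w)=0$ and $\Im\widetilde{F}(Z(w)+\xi,w)\geq c|\xi|^2$ for small $\xi$, using the bound on $M$ combined with a Taylor expansion and the hypothesis on $F$. Hence the real-variable stationary phase lemma applies directly to the parametrized integral $\int e^{it\widetilde{F}(Z(w)+\xi,w)}\widetilde{u}_t(Z(w)+\xi)\,d\xi$: expanding $\widetilde{F}$ around $\xi=0$ to second order and viewing the higher-order terms as a perturbation of a nondegenerate complex quadratic form, one obtains an asymptotic expansion whose $\nu$-th term is a differential operator of order $\leq 2\nu$ in $w$ applied to $\widetilde{u}_t$, evaluated at $Z(w)$, with a factor $t^{-\nu-n/2}e^{it\widetilde{F}(Z(w),w)}$. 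These are the operators $C_{\nu,w}(D)$.

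Finally, the leading coefficient comes from the Gaussian integral
\begin{align*}
\int e^{i\langle Q\xi,\xi\rangle/2}\,d\xi=(2\pi)^{n/2}\Bigl(\det\tfrac{Q}{i}\Bigr)^{-1/2},\qquad Q=\partial_z^2\widetilde{F}(Z(w),w),
\end{align*}
and the choice of branch is fixed by the homotopy $s\mapsto (1-s)Q/i+sI$, which stays in $GL(n,\C)$ because $\Im Q\geq 0$ and $Q$ is nondegenerate; continuously tracking the square root along this path from $1$ at $s=1$ back to $s=0$ defines the value of $(2\pi)^{-n/2}C_{0,w}$ unambiguously.

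The main obstacle is the first step: justifying the Stokes deformation with uniform constants in $w$, and ensuring that the infinite-order vanishing of $\overline{\partial}\widetilde{F}$ and $\overline{\partial}\widetilde{u}_t$ dominates the factor $|t\,\overline{\partial}\widetilde{F}|$ that appears when differentiating $e^{it\widetilde{F}}$. A standard way around this is to carry along weights of the form $|\Im z|^{N}t^{N/2}$ and integrate them against the Gaussian-like bound coming from $\Im\widetilde{F}\geq C|\Im z|^2$ on the chain; every such weight produces an extra factor of $t^{-N/2}$, so the total error is $O(t^{-\infty})$ in the symbol class $S^{-n/2}_{\textup{cl}}$. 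Once that uniformity is in hand, the remainder of the argument is a parametrized version of the classical real stationary phase formula.
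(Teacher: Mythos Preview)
The paper does not give its own proof of this theorem. It is stated in the section ``Previously known results'' with the explicit attribution \textup{(\cite[Theorem 2.3]{melin1975fourier})}, and the introduction to that section says the results are collected ``without proofs.'' So there is no proof in the paper to compare your proposal against.

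That said, your outline is essentially the strategy of the original Melin--Sj\"ostrand argument: deform the contour from $\R^n$ to one through the complex critical point $Z(w)$, control the deformation error by Stokes and the infinite-order vanishing of $\overline{\partial}(e^{it\widetilde{F}}\widetilde{u}_t)$, and then run stationary phase on the shifted contour. Two places in your sketch would need tightening if you were to write this out in full. First, the claim that $|e^{it\widetilde{F}}|\leq 1$ on the homotopy chain does not follow from $\Im F\geq 0$, since that hypothesis is only on the real domain; on the chain $\Im\widetilde{F}$ can be negative, and one must instead trade the almost-analytic $|\Im z|^N$ vanishing against the possible exponential growth, exactly as you indicate in your final paragraph. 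Second, the inequality $\Im\widetilde{F}(Z(w)+\xi,w)\geq c|\xi|^2$ is not available in general: the hypothesis $\Im F\geq 0$ with a nondegenerate (complex) Hessian does not force $\Im\partial_z^2\widetilde{F}$ to be positive definite, so the integral on $\Gamma_w$ is not an honest real stationary-phase integral with strictly positive imaginary quadratic part. Melin--Sj\"ostrand handle this by a more careful analysis of the quadratic model with $\Im Q\geq 0$ and $\det Q\neq 0$, which is also what underlies the homotopy $s\mapsto (1-s)Q/i+sI$ you invoke for the branch of the square root.
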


\subsection{Positive Lagrangian manifolds and complex-valued phase functions}

Let $M$ be a real symplectic manifold of dimension $2n$, fix a point  $\rho_0\in M$ and consider a coordinate neighborhood $W\subseteq \R^{2n}$ of $\rho_0$. Assuming that $\Lambda\subseteq \widetilde{M}$ is an almost analytic manifold containing $\rho_0$, the goal is to extend the symplectic structure of $M$ to $\widetilde{M}$. This is done locally, so the manifold $\Lambda$ is identified with its local representative  $\widetilde{W}\subseteq \C^{2n}$. Note that, given symplectic coordinates $(x,\xi)$ near $\rho_0$ in $M$, we can have coordinates in $\Lambda$ by taking almost analytic extensions $(\Tilde{x},\Tilde{\xi})$ to $\widetilde{W}.$ 

\begin{defi}
The manifold $\Lambda$ is called positive (almost) Lagrangian if, near every real point $(x_0,\xi_0)$, it is equivalent to a manifold of the form $$\Tilde{\xi}=\frac{\partial h}{\partial \Tilde{x}}(\Tilde{x}),\quad \Tilde{x}\in\C^n,$$ where $h$ is an almost analytic function satisfying $\Im h\geq 0$ on $\R^n$, with equality at $x_0.$
\end{defi}

So far, we have no information on the symplectic form $\sigma$. In fact, the manifolds on which $\sigma$ vanish, represent a special case.

\begin{defi}
An almost analytic manifold $\Lambda\subseteq \widetilde{M}$, of real dimension $2n$, is called strictly positive Lagrangian if 
\begin{enumerate}
    \item $\Lambda_\R$ is a submanifold of $M$.
    \item $\sigma_\alpha\vert_{\Lambda_\alpha}\sim 0$ for all local representatives $\Lambda_\alpha$ and all local almost analytic extensions $\sigma_\alpha$  of $\sigma$.
    \item $i^{-1}\sigma(v,v)>0$ for all $v\in T_\rho(\Lambda)\setminus \LR{T_\rho(\Lambda_\R)}^\sim$, $\rho\in\Lambda_\R$.
\end{enumerate}
\end{defi}

In practice, we will consider $M=T^*(X)\setminus 0$, for some manifold $X$, and $\Lambda\subseteq \LR{T^*(X)\setminus 0}^\sim$  positive Lagrangian. Under this conditions, it can be shown that $\Lambda$ is of the form $\Tilde{x}=H(\Tilde{\xi}),$
where $H$ is positive homogeneous of degree $0$ with $\Im H(\xi)\leq 0$ for $\xi$ real.

\begin{defi}\thlabel{defi.phasefun}
A complex-valued function $\phi(x,\theta)$ , smooth in a conic set $V\subseteq\R^n\times\R^N\setminus0$ is called a non-degenerate phase function of positive type if $\Im\phi\geq 0$ and 
    \begin{itemize}
        \item $d\phi\neq 0$,
        \item $\phi$ is homogeneous of degree 1 in $\theta$,
        \item the differentials $\left\{d(\frac{\partial\phi}{\partial\theta_j})\right\}_{i=1}^N$ are linearly independent over $\C$ on $C_{\phi\R}=\left\{(x,\theta)\in V\colon \phi'_\theta=0  \right\}.$
    \end{itemize}
\end{defi}

Let $\Tilde{\phi}(\Tilde{x},\Tilde{\theta})$ be an almost analytic extension of $\phi$, defined in a conic neighborhood $W\subseteq\C^n\times(\C^N\setminus 0)$ of the point $(x_0,\theta_0)\in V$. Then the critical set 
\[C_{\Tilde{\phi}}=\left\{(\Tilde{x},\Tilde{\theta})\in W\colon \partial_{\Tilde{\theta}} \Tilde{\phi}(\Tilde{x},\Tilde{\theta})=0  \right\}\]
is a conic almost analytic manifold of dimension $2n$. The image $\Lambda_{\Tilde{\phi}}$ of $C_{\Tilde{\phi}}$ under the map 
\begin{align}\label{map-CLambda}
(\Tilde{x},\Tilde{\theta})\mapsto \LR{\Tilde{x},\partial_{\Tilde{x}}\Tilde{\phi}(\Tilde{x},\Tilde{\theta})}
\end{align}

is locally, near $\rho_0=(x_0,\phi'_x(x_0,\theta_0))$, a conic positive Lagrangian manifold of dimension $2n$.  Moreover the image of $C_{\Tilde{\phi}\R}$ is precisely $\Lambda_{\Tilde{\phi}\R}$. Since any other choice of almost analytic extension $\Tilde{\phi}$ defines an equivalent almost analytic manifold, we write $\Lambda_\phi$. The reader should keep in mind that this represents an equivalent class of almost analytic manifolds, thus we could choose a different representative at any given moment. We refer to this process as a \emph{re-parametrization}, because it amounts to finding a new regular phase function equivalent to $\phi$. In fact, a positive Lagrangian manifold $\Lambda\subseteq \LR{T^*(X)\setminus 0}^\sim$ can always be parametrize by a non-degenerate phase function of the form \[\phi(x,\xi)=x\cdot\xi-g(\xi),\] for some almost analytic function $g$ which homogeneous of degree 1 and satisfy $\Im g(\xi)\leq 0$, for $\xi$ real.

It is important to mention that $\Lambda_{\Tilde{\phi}\R}\subseteq\R^n\times(\R^n\setminus 0)$ does not need to be a closed Lagrangian manifold in the usual sense. In some cases, it may not even be a manifold. This highlights the advantages of allowing complex-valued phase functions over the usual real-valued theory.

\subsection{Lagrangian distributions and their principal symbol}\label{chap:principal_symbol}

Let $V\subseteq\R^n\times\R^N\setminus0$ be a conic open set, $\phi\in\smooth(V)$ be a non-degenerate phase function, and $a\in S^m_{\textup{cl}}(\R^n\times\R^N\setminus0)$ be supported in a closed conic subset of $V$. We can formally define the distribution $A=I(\phi,a)\in\mathcal{D}'(\R^n)$ by 
\begin{align}\label{FD}
   I(\phi,a)=\int e^{i\phi(x,\theta)}a(x,\theta)\ d\theta.
\end{align}

Note that the contribution of $\Im\phi$ to the integral is a exponentially decreasing factor, which does not contribute to the singularities of the distribution. Thus we get, directly from the real case,  a result about the wave front set of $A$: 
\begin{align}
    \wf{A}\subseteq \left\{(x,\phi'_x(x,\theta))\colon (x,\theta)\in\operatorname{supp}(a)\cap C_{\phi\R}\right\}\subseteq \Lambda_{\phi\R}.
\end{align}

Let $X$ be a $\smooth$ paracompact manifold of dimension $n$ and denote by $\mathcal{D}'(X;\Omega^{\frac{1}{2}})$ be the space of $1/2$-densities in $X$. From this point forward, unless stated otherwise, $\Lambda\subseteq \LR{T^*(X)\setminus 0}^\sim$ denotes a positive Lagrangian manifold. 

\begin{defi}[Lagrangian distributions]\thlabel{FID}
We say that a distribution $A\in \mathcal{D}'(X;\Omega^{\frac{1}{2}})$ belongs to $I_{\textup{cl}}^m(X,\Lambda;\Omega^{\frac{1}{2}})$, if $\wf{A}\subseteq\Lambda_\R$ and there are functions $\phi\in \smooth(\R^n\times\R^N)$ and $a\in S^{m+(n-2N)/4}_{\textup{cl}}(\R^n\times\R^N)$ such that 
\begin{itemize}
    \item for every $\rho_0=(x_0,\xi_0)\in\Lambda_\R$ and every choice of local coordinates, $A$ is microlocally of the form $I(\phi,a)$ near $\rho_0$,
    \item  $\Lambda$ is parametrized by $\phi$, that is $\Lambda\sim\Lambda_{\phi}$ near $\rho_0$,
    \item $\supp {a}$ is contained in a small conic neighborhood of $(x_0,\theta_0)\in C_{\phi\R}.$
\end{itemize}
\end{defi}

For the global theory to be complete, one would have to define the principal symbol of $A\in I^m(X,\Lambda;\Omega^{\frac{1}{2}})$ invariantly. In analogy with the real case, that would be as a section of the tensor product of the bundle of $1/2$-densities in $\Lambda$ and the Maslov line bundle. But, it turns out that it is impossible to replicate this construction for complex manifolds in a way that is invariant under coordinate changes. To avoid this difficulty, \citeauthor{melin1975fourier} \cite{melin1975fourier} introduce \emph{admissible coordinates} and define a \emph{virtual} line bundle over $\Lambda$.  

This is an intricate  construction, so we first introduce the linear situation. Let  $\widetilde{M}$ be the complex extension of a real symplectic vector space $M$  of dimension $2n$, and denote by $\mathcal{L}^-$ the set of negative definite Lagrangian planes in $M$. Let $F\subseteq M$ be a fixed real Lagrangian plane and $\widetilde{F}$ its complexification. We denote by $B(F)$ the set of all real bases of $F$. 

\begin{defi}[Admissible basis]\thlabel{AdmissibleBasis}
Let $N\subseteq \widetilde{M}$ be a positive semi-definite Lagrangian plane. A basis $e=\{e_1,\dots,e_n\}$ of $N$ is said to be admissible if there exist a basis $f=\{f_1,\dots,f_n\}$ of $F$ and a plane $L\in\mathcal{L}^-$ such that, for each $j$, $e_j$ is the projection of $f_j$ along $L$. We write \[e=E(f,L)=E_{N}(f,L), \quad (f,L)\in B(F)\times \mathcal{L}^- \] and denote by $\mathscr{B}(N)$ the set of all admissible bases for $N$.
\end{defi}

\begin{prop}\thlabel{AdmBasTrans}
The set $\mathscr{B}(N)$ is the union of two disjoint arcwise-connected subsets. Two admissible bases $e=E(f,L),\ e'=E(f',L')$ belong to the same set if and only if $f,f'\in B(F)$ have the same orientation. Moreover, there exists a unique function $s=s_{N}: \mathscr{B}(N)\times \mathscr{B}(N)\to \C\setminus 0$ with the following properties
\begin{enumerate}
   \item For all compact set $K\subseteq B(F)\times \mathcal{L}^-$, $s(e,e')$ restricted to $E_{N}(K)\times E_{N}(K)$  is a continuous function of $e,e'$ and $N$.
    \item If $e,e',e''\in \mathscr{B}(N)$, then $s(e,e')s(e',e'')=s(e,e'').$
    \item If $e,e'$ have the same $L\in\mathcal{L}^-$, then $s(e,e')>0$.
     \item  $s^2(e,e')=\pm e/e'$ with the plus sing precisely when $f,f'$ have the same orientation. Here, 
    \[e/e'=e_1\wedge \dots \wedge e_n/ e'_1\wedge \dots \wedge e'_n, \quad e=E(f,L),e'=E(f',L')\in \mathscr{B}(N).\]
\end{enumerate}
\end{prop}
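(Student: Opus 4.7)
The first step is to analyze the topology of $\mathscr{B}(N)$. The space $\mathcal{L}^-$ of negative definite Lagrangian planes in $\widetilde{M}$ is well known to be contractible (for a choice of complementary Lagrangian it may be identified with a Siegel upper half space), while $B(F)$ has exactly two arcwise-connected components determined by orientation. Since the map $E\colon B(F)\times\mathcal{L}^-\to\mathscr{B}(N)$ is continuous and surjective, $\mathscr{B}(N)$ is the union of at most two arcwise-connected subsets, one per orientation class. Their genuine disjointness will follow a posteriori from the existence of $s$ satisfying property (4), since a continuous nonvanishing function cannot jump between the two branches of a square root.

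To construct $s$, I first treat the case when $e=E(f,L)$ and $e'=E(f',L)$ share the same $L$: the projection along $\widetilde{L}$ identifies $N$ linearly with $F$, so $e/e'=f/f'$ is real and nonzero, with sign equal to the relative orientation; I declare $s(e,e')$ to be the positive square root of $\pm e/e' > 0$, which imposes properties (3) and (4) at once in this subcase. For the general case I fix a reference $e_0=E(f_0,L_0)$ and, for $e=E(f,L)$ in the same orientation class, pick a continuous path $(f(t),L(t))$ in $B(F)\times\mathcal{L}^-$ from $(f_0,L_0)$ to $(f,L)$; along this path the wedge ratio $e(t)/e_0$ is a continuous, nonvanishing $\C$-valued function admitting a continuous branch of the square root initialized by the positive value already chosen. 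Setting $s(e_0,e)$ equal to its endpoint and
\[s(e,e'):=s(e_0,e')/s(e_0,e)\]
yields the cocycle identity (2) automatically. Path-independence follows from the simple-connectedness of each orientation component of $B(F)\times\mathcal{L}^-$: any two such paths cobound a disk across which the chosen branch extends without monodromy. For pairs in opposite orientation components, I bridge through a same-$L$ auxiliary basis (already pinned down by (3)--(4)) and close up via the cocycle.

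Property (4) is then immediate from the construction, since $s(e,e')$ arose as a continuous square root of $\pm e/e'$ whose sign is controlled by the orientation data along the chosen path. Joint continuity on $E_N(K)\times E_N(K)$ as $N$ varies (property (1)) is obtained by repeating the construction with $N$ as an additional parameter: the wedge ratio depends continuously on $N$, the positivity normalization is preserved under small deformations, and the branch of the square root selected this way varies continuously in $N$ as well. Uniqueness follows by comparing any two candidates $s,s^\sharp$: their quotient is $\{\pm 1\}$-valued and continuous, hence locally constant, and property (3) combined with the cocycle identity pins it to $+1$ on every component.

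The main obstacle I anticipate is the monodromy-freeness of the continuous square root as the pair $(f,L)$ is varied within an orientation class; this rests on the contractibility of $\mathcal{L}^-$ and on the compatibility of the chosen branch with the variation of $N$ required by property (1). Once this topological input is in place, the remaining verifications reduce to bookkeeping with the cocycle identity.
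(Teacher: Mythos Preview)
The paper does not prove this proposition. It appears in Section~2, ``Previously known results,'' which the author explicitly describes as a summary ``collect[ed], without proofs'' from Melin--Sj\"ostrand \cite{melin1975fourier}; Proposition~\ref{AdmBasTrans} is stated there and never revisited. So there is no proof in the paper to compare your proposal against.

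That said, your sketch is broadly in the spirit of the original argument in \cite{melin1975fourier}: contractibility of $\mathcal{L}^-$, the two orientation classes of $B(F)$, and a continuous selection of a square root of the wedge ratio are the right ingredients. One point you should tighten is well-definedness: you define $s(e_0,e)$ by choosing a preimage $(f,L)\in E_N^{-1}(e)$ and a path to it, and you argue path-independence via simple connectedness of the orientation component of $B(F)\times\mathcal{L}^-$. But $E_N$ need not be injective, so you must also check that two different preimages $(f,L)$ and $(f^\sharp,L^\sharp)$ of the \emph{same} admissible basis $e$ yield the same endpoint value. Simple connectedness of the parameter space handles homotopies of paths with the same endpoints, not changes of endpoint within a fiber $E_N^{-1}(e)$; you need an extra argument (e.g.\ that any two such preimages can be joined by a path along which the wedge ratio $e(t)/e_0$ is constant, or a direct computation showing the two square-root values agree). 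Your circular-looking deferral of the disjointness of the two pieces of $\mathscr{B}(N)$ to property~(4) is fine logically, since your construction of $s$ lives upstairs in $B(F)\times\mathcal{L}^-$ and does not presuppose that disjointness.
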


Consider now a positive Lagrangian manifold $\Lambda\subseteq \LR{T^*(X)\setminus 0}^\sim$ and $\rho\in\Lambda_\R$. Then, $M=T_\rho(T^*X)$ and $N=T_\rho(\Lambda)$ satisfy the conditions of \thref{AdmissibleBasis}. Namely, $T_\rho(T^*X)$ is a symplectic vector space and $T_\rho(\Lambda)\subset T_\rho(T^*X)$ is a positive semi-definite Lagrangian plane. Taking $F\subseteq M$ as the tangent space to the fiber, we can define $\mathscr{B}(T_\rho(\Lambda))$ as above. 

Keeping in mind the previous notation, we now define admissible coordinate systems on $\Lambda$. Denote by $E_{T_\rho(\Lambda)}(S)$ the set of all admissible bases $e=E(f,L)$ of $T_\rho(\Lambda)$ with 
$(f,L)\in S\subseteq B(F)\times \mathcal{L}^-.$

\begin{defi}[Admissible coordinate systems]
Let $\lambda=\{\lambda_1,\dots,\lambda_n\}$ be almost analytic functions on $\Lambda$, defined in some complex neighborhood $\mathcal{U}^\lambda$ of a real point. We say that $\lambda_1,\dots,\lambda_n$ are admissible coordinates on $\Lambda$ if
\begin{enumerate}
    \item The differentials $d\lambda_1,\dots,d\lambda_n$ are linearly independent over $\C$ at real points.
    \item $\delta\lambda=\{\delta\lambda_1,\dots,\delta\lambda_n\}$ belongs locally to $E_{T_\rho(\Lambda)}(K)$ with $\rho\in \mathcal{U}^\lambda\cap \Lambda_\R$, for some compact set  $K\subseteq B(F)\times \mathcal{L}^-$. Here $\delta\lambda$ is the dual basis of $d\lambda$ in $T_\rho(\Lambda)^*$.
\end{enumerate}
We refer to the neighborhoods $\mathcal{U}^\lambda$ as admissible coordinate systems.
\end{defi}

One can show that it is always possible to find admissible coordinates locally, however, this construction is not unique. It is precisely this property what allowed \citeauthor{melin1975fourier} to define the almost analytic version of the Maslov line bundle. 
All we need is a way to define the transition functions between two coordinate systems $\mathcal{U}^\lambda$ and $\mathcal{U}^\mu$.  Thanks to \thref{AdmBasTrans}, we know that 
\begin{itemize}
    \item $s(\delta\lambda,\delta\mu)$ is continuous in $\mathcal{U}^\lambda\cap \mathcal{U}^\mu\cap \Lambda_\R$.
    \item $s^2=\pm \displaystyle\frac{d\mu}{d\lambda}=\pm \det \left[\LR{\frac{\partial\mu_j}{\partial\lambda_k}}_{j,k}\right]$, where $\partial\mu_j/\partial\lambda_k$ is defined by
    \[d\mu_j=\sum_{k}\LR{\frac{\partial\mu_j}{\partial\lambda_k}}d\lambda_k+\sum_{k}\LR{\frac{\partial\mu_j}{\partial\overline{\lambda}_k}}d\overline{\lambda}_k.\]
\end{itemize}

Consider now an almost analytic extension $\textbf{S}$ of $s(\delta\lambda,\delta\mu)$, defined in a small complex neighborhood of $\mathcal{U}^\lambda\cap \mathcal{U}^\mu\cap \Lambda_\R$ in $\Lambda$. Thanks to the previous properties, $\textbf{S}$ can be chosen to satisfy 
\begin{align}
    \LR{\textbf{S}_{\lambda,\mu}}^2 &\sim \pm \displaystyle\frac{d\mu}{d\lambda},\label{trnasition-determinant}\\
    \textbf{S}_{\lambda,\lambda}\sim 1,& \quad  \textbf{S}_{\lambda,\mu}\textbf{S}_{\mu,\omega}\sim \textbf{S}_{\lambda,\omega}.
\end{align}
Additionally, the functions $\textbf{S}_{\lambda,\mu}$ are continuous under small perturbations of $\lambda,\ \mu$ for which $\delta\lambda,\ \delta\mu$ stay in the same component of $E_{T_\rho(\Lambda)}(K)$. For all of these, the functions $\textbf{S}_{\lambda,\mu}$ are the ideal choice of transition functions in the new almost analytic Maslov line bundle.

\begin{defi}
 The bundle $\mathscr{L}\to\Lambda$ is defined as the family of admissible coordinate systems $\mathcal{U}^\lambda$ on $\Lambda$ with transition functions $\textbf{S}_{\lambda,\mu}$. A section $f\in\Gamma(\Lambda,\mathscr{L})$ is an almost analytic function on $\Lambda$ such that, the restriction to each $\mathcal{U}^\lambda$ satisfy  $$f_{\lambda}\sim \textbf{S}_{\lambda,\mu} f_{\mu}.$$ The space of homogeneous section of degree $m$ is denote by $\Gamma^m(\Lambda,\mathscr{L})$.
\end{defi}

A particularly important homogeneous section is the one determined by a phase function $\phi$. The following lemma is a crucial step in defining the principal symbol of a distribution in $I^m_{cl}(X,\Lambda;\Omega^{\frac{1}{2}})$.

\begin{lemma}\thlabel{Dphi}
Let $\phi(x,\theta)$ be a non-degenerate phase function that parametrizing $\Lambda$ near $\rho_0\in\Lambda_\R$. Then, there is a section $\sqrt{d\phi}\in \Gamma^{N/2}(\Lambda,\mathscr{L})$, defined by
        \begin{align}\label{dphi}
            (\sqrt{d\phi})_{\tau}\sim \left[\det\frac{1}{i} 
                \left(\begin{array}{cc}
                \Tilde{\phi}''_{xx}-\Tilde{\psi}''_{xx} & \Tilde{\phi}''_{x\theta}  \\
                \Tilde{\phi}''_{\theta x}& \Tilde{\phi}''_{\theta\theta}
                \end{array}\right)\right]^{-1/2},
        \end{align}
 where $\tau=\widetilde{\xi}- \Tilde{\psi}'_{\widetilde{x}}$ is an admissible coordinate system on $\Lambda$, and $\psi\in\smooth(\R^n)$ satisfy $\Tilde{\psi}''_{xx}<0$. The branch of the square root is chosen as in Theorem \ref{spf}.
\end{lemma}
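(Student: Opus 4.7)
My plan is to verify in turn the four conditions needed for the formula in \eqref{dphi} to determine a well-defined homogeneous section of the virtual line bundle $\mathscr{L}$.

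\textbf{Step 1 (admissibility of $\tau$).} I would first check that $\tau=\widetilde{\xi}-\widetilde{\psi}'_{\widetilde{x}}$ is an admissible coordinate system near $\rho_0=(x_0,\xi_0)\in\Lambda_{\R}$. The differentials $d\tau_j=d\widetilde{\xi}_j-\sum_k\widetilde{\psi}''_{x_jx_k}\,d\widetilde{x}_k$ are $\C$-linearly independent at real points. Taking $f=\{\partial/\partial\xi_j\}\in B(F)$ as a basis of the vertical fibre $F=T_{\rho_0}T^*_{x_0}X$ and $L$ to be the graph of the symmetric form $\psi''_{xx}(x_0)$, the hypothesis $\psi''_{xx}<0$ places $L$ in $\mathcal{L}^-$. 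A short linear-algebra computation then identifies the dual basis $\delta\tau$ of $T_{\rho_0}(\Lambda)$ with $E(f,L)$, so $\delta\tau\in\mathscr{B}(T_{\rho_0}(\Lambda))$.

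\textbf{Step 2 (well-definedness of the right-hand side).} The matrix in \eqref{dphi} is exactly $\partial^2_{(x,\theta)}\Phi$, where $\Phi(x,\theta):=\phi(x,\theta)-\psi(x)$. Applying Schur's block-determinant formula gives
\begin{align*}
\det\tfrac{1}{i}\Phi''=i^{-(n+N)}\det(\phi''_{\theta\theta})\cdot\det\bigl[(\phi''_{xx}-\psi''_{xx})-\phi''_{x\theta}(\phi''_{\theta\theta})^{-1}\phi''_{\theta x}\bigr].
\end{align*}
The first factor is nonzero since $\phi$ is non-degenerate, and the second is the Jacobian along $C_\phi$ of the composition $C_\phi\to\Lambda\to\{\tau\text{-coords}\}$, which is invertible because $\tau$ parametrizes $\Lambda$ locally. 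The branch of the square root is then fixed by the homotopy $s\mapsto(1-s)\Phi''/i+sI$ from \thref{spf}, which stays in $GL(n+N,\C)$ thanks to $\Im\phi\geq 0$ and $\psi''_{xx}<0$.

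\textbf{Step 3 (transformation law).} This is the heart of the proof. Given two choices $\psi,\psi'$ yielding admissible coordinates $\tau,\tau'$, the factor $\det(\phi''_{\theta\theta})$ cancels in the ratio, leaving
\begin{align*}
\frac{\det\tfrac{1}{i}\Phi''_{\psi}}{\det\tfrac{1}{i}\Phi''_{\psi'}}\sim\frac{d\tau}{d\tau'}
\end{align*}
on $C_\phi$, again by Schur. Taking the branch prescribed by \thref{spf} and comparing with $\mathbf{S}_{\tau,\tau'}^2\sim\pm d\tau'/d\tau$ from \thref{AdmBasTrans}, I obtain $(\sqrt{d\phi})_{\tau}\sim\mathbf{S}_{\tau,\tau'}(\sqrt{d\phi})_{\tau'}$. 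For an arbitrary admissible coordinate system $\lambda$, I would then set $(\sqrt{d\phi})_\lambda:=\mathbf{S}_{\lambda,\tau}(\sqrt{d\phi})_\tau$ and invoke the cocycle property $\mathbf{S}_{\lambda,\mu}\mathbf{S}_{\mu,\omega}\sim\mathbf{S}_{\lambda,\omega}$ to confirm independence of the auxiliary choice of $\tau$. Finally, a scaling analysis of the blocks of $\Phi''$ under $\theta\mapsto s\theta$, combined with the fact that the conic structure on $\Lambda$ is that of $T^*X\setminus0$, yields the homogeneity degree $N/2$.

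\textbf{Main obstacle.} The delicate point is Step 3: the sign in $\mathbf{S}^2\sim\pm d\tau'/d\tau$ must be pinned down by the homotopy-based branch choice of \thref{spf}, and all identities hold only modulo almost analytic equivalence. This forces careful bookkeeping of the infinite-order errors near $\Lambda_{\R}$ and of the orientation data carried by the real bases of $F$ entering the definition of $\mathscr{B}(T_\rho\Lambda)$.
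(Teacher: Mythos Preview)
The paper does not actually prove this lemma. \thref{Dphi} appears in Section~2, which the author explicitly describes as a summary of results ``completely taken from \cite{melin1975fourier}''; the lemma is stated without proof, and the only pointer given later (in the proof of the unlabelled lemma opening Section~3) is to \cite[Theorem~6.4]{melin1975fourier}. So there is no in-paper argument to compare against; your proposal is being measured against a citation.

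That said, your direct verification has a real gap in Step~2. You write the Schur factorisation
\[
\det\tfrac{1}{i}\Phi''=i^{-(n+N)}\det(\phi''_{\theta\theta})\cdot\det\bigl[(\phi''_{xx}-\psi''_{xx})-\phi''_{x\theta}(\phi''_{\theta\theta})^{-1}\phi''_{\theta x}\bigr]
\]
and assert that ``the first factor is nonzero since $\phi$ is non-degenerate.'' This is false: non-degeneracy in the sense of \thref{defi.phasefun} says only that the $N\times(n+N)$ matrix $(\phi''_{\theta x}\ \phi''_{\theta\theta})$ has rank $N$ on $C_{\phi\R}$, not that the $N\times N$ block $\phi''_{\theta\theta}$ is invertible. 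The basic example $\phi(x,\xi)=x\cdot\xi-g(\xi)$ used throughout the paper already has $\phi''_{\theta\theta}=-g''$, which need not be invertible. Your Schur complement therefore need not exist, and the subsequent interpretation of the second factor as a Jacobian along $C_\phi$ collapses with it. The same issue contaminates Step~3, where you cancel $\det(\phi''_{\theta\theta})$ in the ratio. The invertibility of the \emph{full} Hessian $\Phi''$ does hold, but it comes from combining the rank condition on $(\phi''_{\theta x}\ \phi''_{\theta\theta})$ with the strict negativity of $\psi''_{xx}$ and $\Im\phi\ge 0$; you need an argument that does not pivot on the $\theta\theta$ block. Melin--Sj\"ostrand handle this by reading the expression directly off the stationary-phase coefficient $C_{0,w}$ of \thref{spf} and tracking the branch through the homotopy, rather than by block-reducing the Hessian.
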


We now recall the definition of Fourier integral operator with complex phase. Let $X,\ Y$ be paracompact $\smooth$ manifolds of dimension $n_X$, $n_Y$ respectively. As with the real case, if $C\subseteq \LR{T^*X\setminus 0}^\sim\times \LR{T^*Y\setminus 0}^{\sim}$ is an arbitrary submanifold, we denote by $C'$ the manifold
\[\left\{(x,y,\xi,-\eta)\colon (x,\xi,y,\eta)\in C \right\}\subseteq \LR{T^*(X\times Y)\setminus 0}^\sim.\]

\begin{defi}
The submanifold $C\subseteq \LR{\LR{T^*X\setminus 0}\times \LR{T^*Y\setminus 0}}^{\sim}$ is a (strictly) positive canonical relation, if $C'\subseteq \LR{T^*(X\times Y)\setminus 0}^\sim$ is a closed conic (strictly) positive Lagrangian manifold and $C_\R\in \LR{T^*X\setminus 0}\times \LR{T^*Y\setminus 0}$.  
\end{defi}
Here \emph{closed} means that $C'_\R$ is a closed set of $T^*(X\times Y)\setminus 0$. Finally, we recall the definition of Fourier integral operator with complex phase.

\begin{defi}\thlabel{C-FIO}
 An operator $A:\smooth_0(Y;\Omega^{\frac{1}{2}})\to\mathcal{D}'(X;\Omega^{\frac{1}{2}})$ is called a Fourier integral operator with complex phase if its distributional kernel $K_A$ belongs to the class of Lagrangian distributions $I_{cl}^m(X\times Y,\Lambda;\Omega^{\frac{1}{2}})$. Here $\Lambda\subseteq  \LR{T^*(X\times Y)\setminus 0}^\sim$ is a closed conic positive Lagrangian manifold satisfying $C'=\Lambda$ for some $C\subseteq \LR{\LR{T^*X\setminus 0}\times \LR{T^*Y\setminus 0}}^{\sim}$. We write $A\in I_{\textup{cl}}^m(X\times Y,C;\Omega^{\frac{1}{2}})$.
\end{defi}

It follows form the real case that, whenever $C$ is a canonical relation, the operator $A$ maps \[\smooth_0(Y;\Omega^{\frac{1}{2}})\to\smooth(X;\Omega^{\frac{1}{2}}),\] moreover, it can be extended to a continuous operator from $\mathscr{E}'(Y;\Omega^{\frac{1}{2}})$ to $\mathcal{D}'(X;\Omega^{\frac{1}{2}})$.

\section{The principal symbol map}The following construction shows how the complex-valued stationary phase formula can be used to give an explicit description of the principal symbol of a Lagrangian distribution with complex phase. In particular, given a distribution $A\in I_{cl}^m(X,\Lambda;\Omega^{\frac{1}{2}})$, which is microlocally of the form $I(\phi,a)$ near some real point $\rho$,  we are able to see the relation between the amplitude $a$ and the action of the map $\mathcal{P}$ in \cite{melin1975fourier}. 

We follow the ideas of \citeauthor{duistermaat1973fourier} for the real case (see \cite{duistermaat1973fourier}) and adapt them to the complex domain. Namely, we use the asymptotic expansion in \thref{spf} to provide a local description of the principals symbol. Later, we show that this description corresponds to the definition provided in \cite{melin1975fourier}. 

\begin{lemma}
Let $\phi\in\smooth(\R^n\times (\R^N\setminus 0))$ be a non-degenerate phase function and $(x_0,\theta_0)\in C_{\phi\R}$ fixed. If $\psi\in\smooth(\R^n)$ is real valued and 
\[\psi(x_0)=0,\quad \psi'_x(x_0)=\phi'_x(x_0,\theta_0),\quad \psi''_{xx}<0.\] Then, the function $F(x,\theta)=\phi(x,\theta)-\psi(x)$ satisfies the assumptions of \thref{spf} around $(x_0,\theta_0)$.
\end{lemma}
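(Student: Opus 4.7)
My plan is to verify, one by one, the three hypotheses required of $F$ in \thref{spf}: (i) $\Im F \geq 0$ with equality at $(x_0,\theta_0)$, (ii) vanishing of the full gradient of $F$ at that point, and (iii) invertibility of the Hessian in the joint variable $(x,\theta)$. Conditions (i) and (ii) will be immediate from the hypotheses on $\psi$ combined with elementary properties of the homogeneous phase~$\phi$; the Hessian non-degeneracy is the substantive content.

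For (i), since $\psi$ is real one has $\Im F=\Im\phi\geq 0$, and Euler's identity for $\theta$-homogeneity of degree~$1$ together with $\phi'_\theta(x_0,\theta_0)=0$ gives $\phi(x_0,\theta_0)=\theta_0\cdot\phi'_\theta(x_0,\theta_0)=0$, so $\Im F(x_0,\theta_0)=0$. For (ii), $\partial_\theta F(x_0,\theta_0)=\phi'_\theta(x_0,\theta_0)=0$ because $(x_0,\theta_0)\in C_{\phi\R}$, and $\partial_x F(x_0,\theta_0)=\phi'_x(x_0,\theta_0)-\psi'(x_0)=0$ by the matching hypothesis on first derivatives.

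For (iii), the plan is to show that the complex symmetric matrix
\[
F''(x_0,\theta_0)=\begin{pmatrix}\phi''_{xx}-\psi''_{xx} & \phi''_{x\theta}\\ \phi''_{\theta x} & \phi''_{\theta\theta}\end{pmatrix}
\]
has trivial kernel. Taking $(v,w)\in\C^{n+N}$ with $F''(v,w)=0$, the bottom block places $(v,w)$ in $T_{(x_0,\theta_0)}C_\phi$, and under the immersion $\pi\colon C_\phi\to\Lambda$, $(x,\theta)\mapsto(x,\phi'_x(x,\theta))$, the image becomes $(v,\phi''_{xx}v+\phi''_{x\theta}w)=(v,\psi''_{xx}v)$ by the top block; hence $(v,\psi''_{xx}v)\in T_{\rho_0}\Lambda$. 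Parametrizing $\Lambda$ near $\rho_0$ as the graph $\tilde\xi=h'(\tilde x)$ with $h$ almost analytic, $\Im h\geq 0$, $\Im h(x_0)=0$, one has $T_{\rho_0}\Lambda=\{(u,h''(x_0)u):u\in\C^n\}$, so $(h''(x_0)-\psi''_{xx})v=0$. Pairing with $\bar v$ and taking imaginary parts gives $\bar v^T\Im h''(x_0)v=0$; since $\Im h''(x_0)\geq 0$ this forces $\Im h''(x_0)v=0$, reducing the equation to $(\Re h''(x_0)-\psi''_{xx})v=0$. Splitting $v=v_R+iv_I$, both $v_R$ and $v_I$ must lie in the kernel of the real symmetric matrix $\Re h''(x_0)-\psi''_{xx}$; if $-\psi''_{xx}$ is large enough that this matrix is positive definite, then $v=0$. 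With $v=0$, the remaining equations $\phi''_{x\theta}w=0$ and $\phi''_{\theta\theta}w=0$ together with the non-degeneracy of $\phi$ — linear independence of the $N$ columns of the $(n+N)\times N$ matrix stacking $\phi''_{x\theta}$ on top of $\phi''_{\theta\theta}$ — force $w=0$.

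The hard part will be this last invertibility step. The literal sign condition $\psi''_{xx}<0$ alone does not suffice: for instance, with $\phi(x,\theta)=\theta x+\theta^2/2$ one has $\Re h''(x_0)=-1$, and choosing $\psi''(x_0)=-1$ gives $\Re h''(x_0)-\psi''_{xx}=0$. I would therefore read $\psi''_{xx}<0$ as implicitly requiring $-\psi''_{xx}$ to dominate $\Re h''(x_0)$, a choice always achievable after possibly adding a sufficiently large negative-definite quadratic to $\psi$, and one which is consistent with the role of $\psi$ in providing admissible coordinates in \thref{Dphi}.
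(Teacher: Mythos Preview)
Your treatment of (i) and (ii) is fine and slightly more explicit than the paper's (you invoke Euler's identity to get $\phi(x_0,\theta_0)=0$, which the paper leaves implicit). For (iii) the paper gives no self-contained argument at all: it observes that the Hessian coincides with the matrix in \thref{Dphi} and simply refers to \cite{melin1975fourier}, Theorem~6.4, for its invertibility. So on that point you are doing strictly more than the paper.

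Your concern about the bare hypothesis $\psi''_{xx}<0$ is legitimate, but your counterexample is not: the phase $\phi(x,\theta)=\theta x+\theta^2/2$ is not homogeneous of degree~$1$ in $\theta$, so it is not a phase function in the sense of \thref{defi.phasefun}. A valid counterexample does exist, however. Take $n=N=2$, $\phi(x,\theta)=x\cdot\theta+|\theta|$ (real, non-degenerate, homogeneous), $(x_0,\theta_0)=((-1,0),(1,0))$, and $\psi''=-I$; then the $4\times 4$ Hessian has its second and fourth rows equal. So the lemma with real $\psi$ and merely $\psi''<0$ is indeed not quite right, and your instinct is correct.

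The clean remedy, and presumably the actual content of the cited Theorem~6.4, is not ``$-\psi''$ large'' but to allow $\psi$ to be complex-valued with $\Im\psi''_{xx}<0$. Then the graph $L=\{(v,\psi''v)\}$ lies in $\mathcal{L}^-$: one computes $i^{-1}\sigma\big((v,\psi''v),\overline{(v,\psi''v)}\big)=2\langle v,\Im\psi''\,v\rangle<0$ for $v\neq 0$, while any vector in $T_{\rho_0}\Lambda$ satisfies the opposite inequality by positivity of $\Lambda$. Hence $v=0$, and then $w=0$ follows from the non-degeneracy of $\phi$ exactly as you argue. This also makes $\tau=\xi-\psi'_x$ genuinely admissible in the sense of \thref{Dphi}, which your ``large real $-\psi''$'' reading does not (a real Lagrangian graph is never in $\mathcal{L}^-$). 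A minor side remark: your parametrization $\tilde\xi=h'(\tilde x)$ is not literally available in the conic setting, since the radial direction $(0,\xi_0)\in T_{\rho_0}\Lambda$ can never lie in such a graph; the positivity argument above sidesteps this issue entirely.
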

\begin{proof}
It is clear that $F(x,\theta)$ is smooth with $\Im F\geq 0$. From the definition of $C_{\phi\R}$, one  sees that $\Im F(x_0,\theta_0)=0$. Thus, we only need to show that
\[\partial_x F(x_0,\theta_0)=0,\qquad \det(\partial^2_x F(x_0,\theta_0))\neq 0.\]
A short computation shows that 
\begin{align*}
    \partial_x F(x_0,\theta_0)= \phi'_x(x_0,\theta_0)-\psi'_x(x_0)=0, \qquad
    \partial^2_x F = 
                \left(\begin{array}{cc}
                \phi''_{xx}-\psi''_{xx} & \phi''_{x\theta}  \\
                \phi''_{\theta x}& \phi''_{\theta\theta}
                \end{array}\right).
\end{align*}
This matrix is the same one in the definition of $\sqrt{d\phi}$ (\thref{Dphi}). Then, the result follows by the same arguments, see \cite{melin1975fourier}, Theorem 6.4. 
\end{proof}

Let $X$ be a smooth manifold of dimension $n$ and consider a distribution $A\in I_{cl}^m(X,\Lambda;\Omega^{\frac{1}{2}})$, which is of the form $I(\phi,a)$ microlocally near $\rho_0=(x_0,\xi_0)$, $\xi_0=(x_0,\phi'_x(x_0,\theta_0))$ with $\phi$ a non-degenerate phase function and $a(x,\theta)\in S^{m+(n-2N)/4}_{cl}(\R^n\times (\R^N\setminus 0))$. Let $u\in C^{\infty}_c$ be supported in a neighborhood of $(x_0,\theta_0)$ and $\psi$ be as in the previous lemma.  We want to understand the asymptotic behaviour of  $I:=(I(\phi,a),v)_{L^2}$, as $t\to \infty$, with $v(x)=e^{-it\psi(x)}u(x)$. By definition, 
\begin{align*}
    (I(\phi,a),v)_{L^2}=\int e^{i\phi(x,\theta)}a(x,\theta)e^{-it\psi(x)}u(x)\ dxd\theta= \int e^{i[\phi(x,\theta)-t\psi(x)]}a(x,\theta)u(x)\ dxd\theta .
\end{align*}
After the change of variables $\theta=t\eta$, we obtain
\begin{align*}
    I=t^N\int e^{it F(x,\eta)}u_t(x,\eta)\ d\eta\ dx,
\end{align*}
with $F=\phi-\psi$ and $u_t=a(x,t\eta)u(x)$. Thanks to the previous lemma, we know that the complex-valued stationary formula applies here. Thus we get from \thref{spf},
\[e^{-it\widetilde{F}(Z(\widetilde{\eta}),\widetilde{\theta})}I\sim \sum_{\nu=0}^{\infty} t^{-\nu-(n+N)/2}\left(C_{\nu,\eta}(D)\widetilde{u_t}\right)Z(\widetilde{\eta}),\]
where $x=Z(\widetilde{\eta})$ is the almost analytic manifold described by $\partial_x\widetilde{F}(\widetilde{x},\widetilde{\eta})=0.$ We can now describe the principal symbol of $A$ as the map that assigns to each $\psi$ the top order term of the asymptotic expansion of $I$. Noting that $C_{0,w}(D)=(2\pi)^{\frac{n+N}{2}}\sqrt{d\phi}$, we can write this map explicitly 
\begin{align}\label{PSmap2}
    \mathcal{T}_A:\psi \mapsto (2\pi)^{\frac{n+N}{2}}\widetilde{a_0}(Z(\widetilde{\eta}),\widetilde{\eta})\widetilde{u}(Z(\widetilde{\eta}))\sqrt{d\phi},
\end{align}
where $\sqrt{d\phi}\in\Gamma^{N/2}(\Lambda,\mathscr{L})$ and  $a_0$, the highest order term in the asymptotic sum of $a$, is and homogeneous function of degree $m+(n-2N)/4$ in $\eta$. The final step is to relate this expression with the formulation of the principal symbol in \cite{melin1975fourier}. Or, equivalent, we prove our main result \thref{PS2}, which is stated again bellow.

Let $s\in\Gamma^{d_1}(\mathscr{L},\Lambda)$ and $b$ an homogeneous function of degree $d_2$ in $\theta$. In the rest of the document, we denote by  $S^{(d)}(\Lambda,\mathscr{L})$, with $d=d_1+d_2$, the space of almost analytic functions $f\sim bs$. 

\begin{thm}
 Let $A=I(\phi,a)$, with $\phi(x,\theta)$ a non-degenerate phase function and $a(x,\theta)\in S^{m+(n-2N)/4}_{cl}$. Then, the principal symbol of $A\in I^m_{cl}(X,\Lambda,\Omega^{\frac{1}{2}})$ is equivalent to the homogeneous section 
\begin{align}\label{ps2}
    \sigma^m(A)\sim\widetilde{a_0} \sqrt{d\phi}\in S^{(m+n/4)}(\Lambda,\mathscr{L}),
\end{align}
where $a_0$ is the top order term of the asymptotic expansion of $a$.
\end{thm}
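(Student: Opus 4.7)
The strategy is to take the identity
$\mathcal{T}_A:\psi \mapsto (2\pi)^{(n+N)/2}\widetilde{a_0}(Z(\widetilde{\eta}),\widetilde{\eta})\widetilde{u}(Z(\widetilde{\eta}))\sqrt{d\phi}$
derived just above from the complex stationary phase formula \thref{spf}, strip off the freely varying test factor $\widetilde{u}(Z(\widetilde{\eta}))$, and show that the remaining coefficient $\widetilde{a_0}\sqrt{d\phi}$ (i) is a well-defined section of $\mathscr{L}$ of the claimed order, independent of the auxiliary choices of $\psi$ and of the phase function $\phi$, and (ii) agrees with Melin and Sjöstrand's definition via the map $\mathcal{P}^{-1}$. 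The order count is immediate: $a_0$ is homogeneous of degree $m+(n-2N)/4$ in $\theta$, and $\sqrt{d\phi}\in\Gamma^{N/2}(\Lambda,\mathscr{L})$ by \thref{Dphi}, so the product lies in $S^{(m+n/4)}(\Lambda,\mathscr{L})$ as claimed.

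For invariance under the choice of $\psi$, I would argue that any two real-valued $\psi_1,\psi_2$ satisfying $\psi_j'(x_0)=\phi'_x(x_0,\theta_0)$ and $\psi''_{j,xx}<0$ can be joined by the convex homotopy $s\psi_1+(1-s)\psi_2$, along which $\psi''_{xx}$ remains negative definite. The Hessian block $\widetilde{\phi}''_{xx}-\widetilde{\psi}''_{xx}$ inside $\sqrt{d\phi}$ therefore stays in the same connected component of $GL(n+N,\C)$, so the branch of the square root selected in \thref{Dphi} is preserved and the leading coefficient extracted from \thref{spf} is unchanged.

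For invariance under the choice of phase, suppose $\phi_1,\phi_2$ both parametrize $\Lambda$ near $\rho_0$. Applying the stationary phase computation to each produces candidate symbols $\widetilde{a_0^{(j)}}\sqrt{d\phi_j}$; I would compare them by pairing $A$ against the same test section $v=e^{-it\psi}u$ in two different ways and invoking uniqueness of the leading coefficient. The ratio $\sqrt{d\phi_1}/\sqrt{d\phi_2}$ is, by \thref{Dphi} together with \thref{AdmBasTrans}, precisely the transition function $\textbf{S}_{\lambda_1,\lambda_2}$ of \eqref{trnasition-determinant}, while the inverse ratio appears in $\widetilde{a_0^{(1)}}/\widetilde{a_0^{(2)}}$ on the critical manifold. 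This is exactly the cocycle condition required for $\widetilde{a_0}\sqrt{d\phi}$ to define a global section of $\mathscr{L}$.

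The last and hardest step is identifying $\widetilde{a_0}\sqrt{d\phi}$ with $\mathcal{P}^{-1}(A)$. The plan is to feed the candidate section into $\mathcal{P}$ as defined in \cite{melin1975fourier} and check, by a second application of \thref{spf}, that the resulting oscillatory integral has the same leading asymptotic behaviour as $A$ when paired against any admissible $v=e^{-it\psi}u$; the difference then pairs to lower order against every such test and therefore lies in $I^{m-1}_{\textup{cl}}(X,\Lambda;\Omega^{\frac{1}{2}})$, which by exactness of the symbol sequence suffices. The real obstacle here is the bookkeeping of constants and branches: the factor $(2\pi)^{(n+N)/2}$ and the square-root branch selected by the homotopy in \thref{spf} must match the conventions implicit in Melin and Sjöstrand's construction of $\mathcal{P}$, and this matching ultimately rests on the compatibility between the admissible bases of \thref{AdmissibleBasis} and the Hessian block appearing in \thref{Dphi}.
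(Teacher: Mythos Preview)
Your outline is sound, but it is considerably more elaborate than the argument the paper actually gives, and one step risks circularity. The paper does not separately verify invariance under the choice of $\psi$ or under change of phase $\phi$. Instead it first checks, by a short direct computation, that any almost analytic homogeneous function on $\Lambda$ defines a homogeneous section of $\mathscr{L}$; together with \thref{Dphi} this already shows $\widetilde{a_0}\,\widetilde{u}\,\sqrt{d\phi}\in\Gamma^{m+n/4}(\Lambda,\mathscr{L})$. For the identification with $\mathcal{P}^{-1}(A)$, the paper simply quotes the explicit action of $\mathcal{P}$ from \cite{melin1975fourier}: if $s\sim b\sqrt{d\phi}$ then $\mathcal{P}(s)=I(\phi,\overline{b})$. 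Plugging in $s=\mathcal{T}_A(\psi)$ gives $\mathcal{P}(\mathcal{T}_A(\psi))\sim I(\phi,\widetilde{a_0})$ immediately, with no second stationary phase computation and no branch-matching bookkeeping to perform --- all of that is already absorbed into Melin and Sj\"ostrand's construction of $\mathcal{P}$ and into \thref{Dphi}. Since $\mathcal{P}$ is a bijection, invariance of the symbol under the auxiliary choices is then automatic.

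Your route would also work, but note that when you write ``by exactness of the symbol sequence suffices'' you are invoking something the paper only establishes \emph{after} this theorem, as a corollary of it and of the bijectivity of $\mathcal{P}$. You would need to replace that appeal by a direct statement about $\mathcal{P}$ respecting the order filtration, which is available from \cite{melin1975fourier} but should be cited rather than derived from the exact sequence.
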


\begin{proof}
Let be $\mathcal{P}$ be the bijection in \cite{melin1975fourier}. The result would follow after showing that, $\mathcal{T}_A(\psi)\sim\mathcal{P}^{-1}(A)$ as sections of $\mathscr{L}$. But first, we need to verify that $\mathcal{T}_A(\psi)\in \Gamma^{m+n/4}(\Lambda,\mathscr{L})$.

It is not hard to see that almost analytic homogeneous function define homogeneous sections in $\mathscr{L}$.  Indeed, let $g$  be an almost analytic function in $\Lambda$, homogeneous of degree $m$ in $\xi$. We know that for each $\rho\in\Lambda_\R$, it is possible to find and admissible coordinate system $\mathcal{U}^\lambda$, locally near $\rho$. Then, we can define a function $f$ such that $f_\lambda=g\vert_{ \mathcal{U}^\lambda}$.  Clearly this defines a section of $\mathscr{L}$.  Now, fix $\rho=(x,\xi)\in\Lambda_\R$ and $t\in\R_+$. Then, near $\textbf{t}\rho$, points in $\Lambda$ are of the form $(x,t\xi)$ . Let $\lambda=\{\lambda_1,\dots,\lambda_n\}$  and $\mu=\LR{\textbf{t}^{-1}}^*\lambda$ be admissible coordinates near $\rho$ and $\textbf{t}\rho$, respectively. Explicitly, for each $j=1,\dots,n$, 
\[\mu_j(x,t\xi)=\LR{\lambda_j\circ\textbf{t}^{-1}}(x,t\xi)=\lambda_j(x,\xi).\] 
Then, near $\rho$, we have $\LR{\textbf{t}^{-1}}^*(f_\lambda)(x,\xi)\sim f\LR{x,\frac{1}{t}\xi}\sim t^{-n/2}f_\lambda(x,\xi)$ and
\[f_{\LR{\textbf{t}^{-1}}^*\lambda}\sim f(x,t\xi)=t^m f(x,\xi)=t^{n/2}t^{-n/2}t^m f(x,\xi)\sim t^{m-n/2}f_\lambda(x,\xi)\sim t^m \LR{\textbf{t}^{-1}}^*\LR{f_\lambda}.\] 
Which, by definition, means that $f\in \Gamma^m(\Lambda,\mathscr{L})$. This, together with the local identification of $C_\phi$ and $\Lambda\sim\Lambda_\phi$, allow us to interpret $\widetilde{a_0}$ and $\widetilde{u}$ as elements of $\Gamma^{m+(n-2N)/4}(\Lambda,\mathscr{L})$ and $\Gamma^{0}(\Lambda,\mathscr{L})$, respectively. Then, it follows that the right hand side of \eqref{PSmap2} defines an homogeneous section of degree $m+n/4$. 

Recall that a distribution $A\in I^m_{\textup{cl}}$ is locally given by an oscillatory integral 
\[A=I(\phi,a)=\int e^{i\phi(x,\eta)}a(x,\eta)\ d\eta,\]
where $a\in S^{m+(n-2N)/4}_{cl}$. We can assume, without loss of generality, that $\mathcal{P}(s)=I(\phi,\overline{b_0})$ where $b_0$ is the top-order term of the asymptotic sum of some $b\in S^{m+(n-2N)/4}_{cl}$ and $s$ is some section in $\Gamma^{N/2}(\Lambda,\mathscr{L})$. Thus, it remains to show that $\mathcal{P}\circ \mathcal{T}_A$ is the identity in the set of oscillatory integrals $I(\phi,b_0)$, with $b_0$ as above. 

The action of $\mathcal{P}$, tells us that if $s\sim b\sqrt{d\phi}\in \Gamma^{m+n/4}(\Lambda,\mathscr{L}),$ then $\mathcal{P}(s)=I(\phi,\overline{b})$ with $\overline{b}$ an extension of $b$ to $\C^n\times \C^N$. Then, taking $s=\mathcal{T}_A(\psi)\sim \widetilde{a_0}\widetilde{u}\sqrt{d\phi}$, we have $\mathcal{P}(s)=I(\phi,\overline{b})$ with $b=\widetilde{a_0}\widetilde{u}$. Since this is valid for any $u\in C^{\infty}_c$ and any almost analytic extensions, we conclude that \[\mathcal{P}(s)\sim I(\phi,\widetilde{a_0}).\]

Or, equivalently, $\mathcal{P}(\mathcal{T}_A(\psi)\sim I(\phi,\widetilde{a_0})$, where $a_0$ is the highest order term of the amplitude $a$ in the local representation $A=I(\phi,a)$, which concludes the proof. 
\end{proof}

It is now easy to see that the principal symbol map fits into a short exact sequence, equivalent to the one in the real-valued case. First note that $\sigma^m$ maps
\begin{align}\label{PSM-complex}
\sigma^{m}:I^m_{\textup{cl}}(X,\Lambda;\Omega^{\frac{1}{2}})\to S^{(m+n/4)}(\Lambda,\mathscr{L}).
\end{align} 
Moreover, the fact that $\mathcal{P}$ is bijective, implies that $\sigma^{m}$ is surjective. Thus, similarly to the real-valued case, the map $\sigma^{m}$ fits into a short exact sequence
\begin{align*}
0\to I^{m-1}_{\textup{cl}}(X,\Lambda;\Omega^{\frac{1}{2}}) \to I^m_{\textup{cl}}(X,\Lambda;\Omega^{\frac{1}{2}})\xrightarrow{\sigma^{m}}S^{(m+n/4)}(\Lambda,\mathscr{L}) \to 0.
\end{align*}

\section{Clean composition}In \cite{melin1975fourier}, the case of \emph{transverse composition} was considered. We wish to relax this condition, so we consider the case of \emph{clean composition}. To do so, we first need to introduce a more general type of phase function that we were not able to find in the existing literature, even though it is a natural generalization of the non-degenerate case. 
\begin{defi}
 A complex-valued function $\phi(x,\theta)$, smooth in a conic set $V\subset\R^n\times\R^N\setminus0$,  is called clean phase function of positive type if $\Im\phi\geq 0$ and 
    \begin{itemize}
        \item $d\phi\neq 0$,
        \item $\phi$ is homogeneous of  of degree 1 in $\theta$,
        \item there exist $M\leq N$, such that $M$ of the differentials $\left\{d(\frac{\partial\phi}{\partial\theta_j})\right\}_{j=1}^N$ are linearly independent over $\C$ on 
        \[C_{\phi\R}=\left\{(x,\theta)\in V\colon \phi'_\theta=0  \right\}.\]
        The number $e=N-M$ is called the excess of $\phi$.
    \end{itemize}
\end{defi}

It is possible to organize the variables so that we can write $\theta=(\theta',\theta'')\in\R^M\times \R^e$, where  the differentials $\left\{d(\partial\phi/\partial\theta'_j)\right\}$ are the ones satisfying the definition. As usual, we denote by $\Lambda_\phi$ the manifold 
\[\left\{\LR{\Tilde{x},\partial_{\Tilde{x}}\Tilde{\phi}(\Tilde{x},\Tilde{\theta})}\in \C^n\times\C^N\setminus 0 \colon (\Tilde{x},\Tilde{\theta})\in C_{\Tilde{\phi}}  \right\},\]
for some almost analytic extension $\Tilde{\phi}$ of $\phi$ to a complex extension of $V$. 
One can easily verify that $\Lambda_\phi$ is a positive Lagrangian manifold of real dimension $2n$. As such, it can be parameterized by a non-degenerate phase function,
\begin{align}\label{reg-clean-equiv-phase}
    \widetilde{\psi}(\Tilde{x},\Tilde{\xi})=\Tilde{x}\Tilde{\xi}-g(\Tilde{\xi}),\quad (\Tilde{x},\Tilde{\xi})\in \C^n\times (\C^n\setminus 0)
\end{align}
for some almost analytic function $g$ with $\Im g\leq 0$ at $\xi\in\R^n\setminus 0.$ Denoting by $\psi$ the restriction of $\widetilde{\psi}$ to the real domain, we see that $\Lambda_\phi$ and $\Lambda_\psi$ are equivalent in the sense of almost analytic manifolds. This equivalence allows us to associate distributions in $I^m_{\textup{cl}}$  with a microlocal representation $I(\phi,a)$, where $\phi$ is a clean phase function instead of a non-degenerate one. 

 \begin{prop}\thlabel{CleanPhaseDist}
Let $\phi(x,\theta)\in\smooth(\R^n\times(\R^N\setminus0))$ be a clean phase function of excess $e$. Then, for $a\in S^{m+(n-2N)/4}(\R^n\times(\R^N\setminus0))$, the oscillatory integral $I(\phi,a)$ defines a Fourier distribution of order $m+e/2$.
\end{prop}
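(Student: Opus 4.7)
The plan is to reduce the problem locally to the non-degenerate case of \thref{FID}. Fix a real critical point $\rho_0=(x_0,\theta_0)\in C_{\phi\R}$. By the clean phase condition I split $\theta=(\theta',\theta'')$ with $\theta'\in\R^M$ and $\theta''\in\R^e$, ordered so that the $M$ differentials $\{d(\partial\phi/\partial\theta'_j)\}_{j=1}^{M}$ are $\C$-linearly independent on $C_{\phi\R}$ near $\rho_0$; the remaining variables $\theta''$ then parametrize the $e$-dimensional fibers of the projection $C_{\phi\R}\to\Lambda_{\phi\R}$. By the microlocal support condition in \thref{FID} I may also assume that $a$ is supported in a small conic neighborhood of $\rho_0$, in particular in a region of the form $|\theta''|\leq C|\theta'|$.

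The first main step is to construct, in the almost analytic category, a conic change of fiber variables after which the phase becomes independent of $\theta''$. I would adapt the real-variable Morse/Hadamard reduction for clean phase functions, exploiting that $\partial\phi/\partial\theta''$ vanishes on $C_\phi$ and that $\{d(\partial\phi/\partial\theta'_j)\}_{j=1}^{M}$ spans the conormal of $C_\phi$; an implicit-function-theorem argument then absorbs the $\theta''$-derivatives of $\phi$ into a redefinition of $\theta'$, producing a non-degenerate phase function $\widetilde{\phi}(x,\theta')$ of positive type in $M$ variables with $\Lambda_{\widetilde{\phi}}\sim\Lambda_\phi$ near $\rho_0$. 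I expect this step to be the main technical obstacle, since the construction must preserve both the positivity $\Im\phi\geq 0$ and the homogeneity throughout the complexification, and is only valid up to equivalence of almost analytic germs.

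With $\phi(x,\theta',\theta'')\sim\widetilde{\phi}(x,\theta')$ in hand, I rescale $\theta''=|\theta'|\omega''$ on the bounded conic region $|\omega''|\leq C$, so that $d\theta''=|\theta'|^{e}\,d\omega''$. Using the homogeneity of each term in the asymptotic expansion of $a$, the integrated amplitude
\begin{align*}
b(x,\theta')\;:=\;|\theta'|^{e}\int\chi(\omega'')\,a\bigl(x,\theta',|\theta'|\omega''\bigr)\,d\omega''
\end{align*}
is a classical symbol of order $m+(n-2N)/4+e$ in $\theta'\in\R^M$, and the oscillatory integral reduces, modulo smoothing, to
\begin{align*}
I(\phi,a)\;\equiv\;\int e^{i\widetilde{\phi}(x,\theta')}\,b(x,\theta')\,d\theta'.
\end{align*}

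To conclude, I rewrite the amplitude order using $M=N-e$ as
\begin{align*}
m+\tfrac{n-2N}{4}+e\;=\;\bigl(m+\tfrac{e}{2}\bigr)+\tfrac{n-2M}{4},
\end{align*}
and apply \thref{FID} to the non-degenerate phase $\widetilde{\phi}$ with $M$ parameters. The resulting distribution belongs to $I^{m+e/2}_{\textup{cl}}(X,\Lambda_{\widetilde{\phi}};\Omega^{\frac{1}{2}})=I^{m+e/2}_{\textup{cl}}(X,\Lambda;\Omega^{\frac{1}{2}})$, which is the claim.
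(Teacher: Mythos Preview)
Your argument is correct in outline but follows a genuinely different route from the paper. The paper does \emph{not} perform a change of fiber variables to make the phase independent of $\theta''$; instead it freezes $\theta''$, observes that for each fixed $\theta''$ the function $(x,\theta')\mapsto\phi(x,\theta',\theta'')$ is a non-degenerate phase of positive type, and then invokes \cite[Theorem~4.2]{melin1975fourier} (whose engine is the complex stationary phase formula, \thref{spf}) to convert the inner integral directly into an oscillatory integral with the standard phase $\psi(x,\xi)=x\cdot\xi-g(\xi)$. Only afterwards are the excess variables $\theta''$ integrated out, and the order shift by $e/2$ is read off from the fact that stationary phase was applied in $e$ fewer variables. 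Your approach, by contrast, mirrors the real-variable reduction of clean to non-degenerate phase functions: a Hadamard/implicit-function step produces new fiber coordinates in which $\phi$ no longer depends on $\theta''$, and the $\theta''$-integration then acts purely on the amplitude. This is perfectly legitimate and yields the same order arithmetic $m+\tfrac{n-2N}{4}+e=(m+\tfrac{e}{2})+\tfrac{n-2M}{4}$, but it front-loads the difficulty into the change-of-variables step, which---as you correctly flag---must be carried out in the almost analytic category while preserving homogeneity and the positivity $\Im\phi\geq 0$. The paper's route sidesteps that construction entirely by leaning on the Melin--Sj\"ostrand machinery already in place; your route is more self-contained and closer to the real theory, at the cost of that extra technical lemma.
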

\begin{proof}
Fix a point $(x_0,\theta_0)\in\C_{\phi\R}$ and let $\xi_0=\phi'_x(x_0,\theta_0)$. We know that, near $(x_0,\xi_0)$, the almost analytic manifold $\Lambda\sim\Lambda_\phi$ is equivalent to a manifold $\Lambda_\psi$, with $\psi$ the non-degenerate phase function \eqref{reg-clean-equiv-phase}. We wish to follow the construction in \cite[Theorem 4.2]{melin1975fourier} to show that there exist an amplitude $b\in S^{m+(n-2N')/4}(\R^n\times(\R^{N'}\setminus0))$, $N'=n$, such that the oscillatory integrals $I(\phi,a)$ and $I(\psi,b)$  are microlocally equivalent near $(x_0,\xi_0)$. Which implies that $I(\phi,a)$ defines a Fourier distribution. 

The proof is based on the complex-valued stationary phase formula (\thref{spf}). There, it was possible to apply the result with respect to the variables $(x,\theta)$, because the phase functions were assumed to be non-degenerate. This is no longer true for a clean phase function $\phi$. Instead, we need to consider $\theta=(\theta',\theta'')\in(\R^M\times \R^e)\setminus0$, and
\begin{align}\label{clean-2int}
    I(\phi,a)\sim \int \left(\int e^{i\phi(x,\theta',\theta'')}a(x,\theta',\theta'')\ d\theta'\right)\ d\theta''.
\end{align}
Since the differentials  $\left\{d(\partial\phi/\partial\theta'_j)\right\}$ are linearly independent over $\C$ at real points, we can apply the stationary phase formula to the inner integral in \eqref{clean-2int}. The rest of the argument in \cite[Theorem 4.2]{melin1975fourier} applies without further modification. The desired conclusion follows after integrating out the excess variables $\theta''$.

Finally, note that applying the stationary formula in $e$ variables less, increases the order of the distribution by $e/2$. Now the asymptotic sum \eqref{stationary-phase} is in $S^{-(n+N)/2+e/2}$ instead of $S^{-(n+N)/2}$, as it was the case in \cite{melin1975fourier}.
\end{proof}

Since the construction that leads to our description of the principal symbol is also based on the stationary phase formula,  equation \eqref{ps2} applies only to the inner integral in \eqref{clean-2int}. In other words, the principal symbol of the distribution $A=I(\phi,a)$ above is be given by the integral, with respect to  $\theta''$ , of the principal symbol of the inner distribution. But, for this to be correctly defined, we first need to modify the definition of $\sqrt{d\phi}$.

\begin{lemma}\thlabel{Dphi-clean}
Let $\phi(x,\theta)$ be a clean phase function with excess $e$ that parameterizes $\Lambda$. Then, there is a section $\sqrt{d\phi}\in \Gamma^{(N-e)/2}(\mathscr{L},\Lambda)$, defined by
        \begin{align}\label{dphi-clean}
            (\sqrt{d\phi})_{\tau}\sim \left[\det\frac{1}{i} 
                \left(\begin{array}{cc}
                \Tilde{\phi}''_{xx}-\Tilde{\psi}''_{xx} & \Tilde{\phi}''_{x\theta'}  \\
                \Tilde{\phi}''_{\theta' x}& \Tilde{\phi}''_{\theta'\theta'}
                \end{array}\right)\right]^{-1/2},
        \end{align}
        where $\theta''$ are the excess variables in the splitting $\theta=(\theta',\theta'')$. Here $\tau$, $\psi$ and the branch of the square root are chosen as in \thref{Dphi}.
\end{lemma}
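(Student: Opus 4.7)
The plan is to reduce the clean phase case to the non-degenerate one and invoke \thref{Dphi}. The key observation, already exploited in \thref{CleanPhaseDist}, is that after the splitting $\theta=(\theta',\theta'')\in\R^M\times\R^e$ with $M=N-e$, the $M$ differentials $\{d(\p{\theta'_j}\phi)\}_{j=1}^M$ are linearly independent over $\C$ on the critical set, so for each fixed $\theta''$ close to $\theta''_0$, the function $\phi(\,\cdot\,,\,\cdot\,,\theta'')$ behaves as a non-degenerate phase in the variables $(x,\theta')$.

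First, I would fix a real point $\rho_0=(x_0,\phi'_x(x_0,\theta_0))\in\Lambda_\R$ with $\theta_0=(\theta'_0,\theta''_0)$, and choose $\psi\in\smooth(\R^n)$ satisfying $\psi(x_0)=0$, $\psi'_x(x_0)=\phi'_x(x_0,\theta_0)$, and $\psi''_{xx}<0$. The first lemma of Section 3, applied with $\theta''=\theta''_0$ held fixed as a parameter, shows that $F(x,\theta')=\phi(x,\theta',\theta''_0)-\psi(x)$ satisfies the hypotheses of \thref{spf} in the variables $(x,\theta')$. The non-degenerate construction behind \thref{Dphi} then yields a local candidate for $\sqrt{d\phi}$ given precisely by formula \eqref{dphi-clean}, with admissible coordinate $\tau=\widetilde{\xi}-\widetilde{\psi}'_{\widetilde{x}}$ chosen as in \thref{Dphi}.

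Next I would verify that \eqref{dphi-clean} defines a section of $\mathscr{L}$ of the claimed homogeneity. The transformation law under changes of admissible coordinates $\tau\to\mu$ is inherited directly from \thref{Dphi}, since for fixed $\theta''$ the reduced phase is non-degenerate, and the Maslov transition functions $\textbf{S}_{\tau,\mu}$ act on the determinant in the same manner as in the non-degenerate situation. Independence of the section (up to equivalence) from the choice of $\psi$ follows by the computation of \cite[Theorem 6.4]{melin1975fourier} applied to the reduced phase. The homogeneity count is analogous: the matrix in \eqref{dphi-clean} is of size $(n+M)\times(n+M)$, and the same scaling argument as in the non-degenerate case gives degree $-M$ for the determinant and hence homogeneity $M/2=(N-e)/2$ for the resulting section, matching the statement $\sqrt{d\phi}\in\Gamma^{(N-e)/2}(\mathscr{L},\Lambda)$.

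The main obstacle I expect is proving that \eqref{dphi-clean} is independent, up to equivalence, of the particular splitting $\theta=(\theta',\theta'')$ used to extract the $M$ linearly independent differentials among $\{d(\p{\theta_j}\phi)\}_{j=1}^N$. Two such admissible splittings are related by a fiber-linear change of coordinates in $\theta$ that, on the critical set $C_{\phi\R}$, is block-triangular modulo terms vanishing to first order in directions transverse to $C_\phi$. The key linear-algebra identity to be verified is that the two resulting Hessian determinants differ by the square of the Jacobian of this change, which then cancels against the corresponding factor coming from the reparametrization of $C_\phi$, so both expressions determine the same section of $\mathscr{L}$.
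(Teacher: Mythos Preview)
Your approach is essentially the same as the paper's: fix $\theta''$, observe that $\phi(\cdot,\cdot,\theta'')$ is non-degenerate in $(x,\theta')$, invoke \thref{Dphi}, and read off the homogeneity from the size $(n+N-e)\times(n+N-e)$ of the Hessian. The paper's proof is in fact terser than yours and does not address independence of the splitting $\theta=(\theta',\theta'')$, so the obstacle you flag in your last paragraph is something the paper simply passes over in silence.
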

\begin{proof}
Note that, for $\theta''$ fixed, $\phi$ defines a non-degenerate phase function with respect to the variables $(x,\theta')$. Then, it follows from \thref{Dphi}, that $\sqrt{d\phi}$ defines a section of $\mathscr{L}$. Since the matrix in \eqref{dphi-clean} is now of dimension $(n+N-e)\times (n+N-e)$, we see that $\sqrt{d\phi}$ is homogeneous of degree $(N-e)/2$, which completes the proof. 
\end{proof}

With this new meaning for $\sqrt{d\phi}$, we can apply \thref{PS2} to the inner integral in equation \eqref{clean-2int}. It follows that $\widetilde{a_0} \sqrt{d\phi}$ defines an element of $S^{(m-e+n/4)}(\Lambda,\mathscr{L}),$ but, we still need to integrate out the excess variables $\theta''$. In principle, this integral may not be defined. Thus, similar to the real case, we restrict the domain of integration. 

Let $\pi:\Lambda_\phi\to\C^n$ be the projection $\pi(\Tilde{x},\Tilde{\xi})=\Tilde{\xi}.$ The composition of $\pi$ with the map \eqref{map-CLambda} defines a fiber bundle over $\Lambda$ with fiber
\[C_{\Tilde{\xi}}=\left\{(\Tilde{x},\Tilde{\theta})\colon \partial_{\Tilde{\theta}}\Tilde{\phi}(\Tilde{x},\Tilde{\theta})=0,\ \partial_{\Tilde{x}}\Tilde{\phi}(\Tilde{x},\Tilde{\theta})=\Tilde{\xi} \right\}.\] 
The fiber $C_{\Tilde{\xi}}$ can be interpreted as an almost analytic manifold of dimension $2e$, if the differentials $\left\{d(\frac{\partial\phi}{\partial x_j})\right\}_{j=1}^n$ are linearly independent at real points.  In any case, we 
we can compute $\int_{C_{\Tilde{\xi}}}\widetilde{a_0} \sqrt{d\phi}\ d\theta''$ if we assume that $C_{\xi\R}$, the restriction of $C_{\Tilde{\xi}}$ to the real domain, is compact.

\begin{defi}\thlabel{ps-clean-rem}
 Let $\phi(x,\theta)\in\smooth(\R^n\times(\R^N\setminus 0))$ be a clean phase function with excess $e$, such that the set $C_{\xi\R}$ is compact, and $a(x,\theta)\in S^{m+(n-2N-2e)/4}_{\textup{cl}}$. Then, the principal symbol of $A\in I^{m}_{\textup{cl}}(X,\Lambda;\Omega^{\frac{1}{2}})$ is 
\begin{align}\label{ps-clean}
\sigma^{m+e/2}(A)=\int_{C_{\xi\R}}\widetilde{a_0} \sqrt{d\phi}\ d\theta''\in S^{(m-e+n/4)}(\Lambda,\mathscr{L}).
\end{align}
where $A=I(\phi,a)$ and $a_0$ denotes the top order term of the asymptotic expansion of $a$.
\end{defi}

Now let $X,\ Y,\ Z$ be paracompact $\smooth$ manifolds of dimension $n_X$, $n_Y$, $n_Z$ respectively. Suppose that 
\[A_1\in I^{m_1}_{cl}(X\times Y, \Lambda_1;\Omega^{\frac{1}{2}})\quad \text{and} \quad A_2\in I^{m_2}_{cl}(Y\times Z, \Lambda_2;\Omega^{\frac{1}{2}}),\]
are properly supported operators, where $C_j=\Lambda'_j$ are positive canonical relations. Denote by $\Delta_Y$ the subspace \[\operatorname{diag}(Y)=\{(y,y)\in Y\times Y\}.\]
Set $D=T^*X\times \Delta_{T^*Y}\times T^*Z$ and let $\widetilde{D}$ be its almost analytic complexification.

We consider the following condition
 
\begin{ass}\thlabel{CleanInt} Suppose that:
\begin{enumerate}
    \item The intersection $(C_{1\R}\times C_{2\R})\cap D$ is clean with excess $e$.
    \item The natural projection $(C_{1\R}\times C_{2\R})\to (T^*X\setminus 0)\times (T^*Z\setminus 0)$ is injective and proper.
\end{enumerate}
\end{ass}

Note that, whenever the excess is equal to zero, the intersection is actually transverse. In other words, after showing that under \thref{CleanInt}, the composition of Lagrangian distributions stays in the class; the composition theorem in \cite{melin1975fourier} follows as a particular case. 
 
The following results follow form arguments similar to those in \cite{melin1975fourier}, so we only present the parts where the proofs are different. 

 \begin{prop}\thlabel{CleanComp}
Let  $C_1\in (T^*(X\times Y)\setminus 0)^{\sim}$, $C_2\in(T^*(Y\times Z\setminus 0))^{\sim}$ be positive canonical relations satisfying \thref{CleanInt}. Then, there exist a manifold $(C_1\circ C_2)'$, parameterized by a clean phase function $\Phi$, such that $(C_1\circ C_2)_{\R}=(C_{1\R}\circ C_{2\R})$. The excess of $\Phi$ is equal to the excess of the intersection. 
 \end{prop}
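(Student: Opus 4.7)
The plan is to construct an explicit candidate phase function $\Phi$ out of the phase functions parameterizing $C_1'$ and $C_2'$, identify its critical set with $(C_{1\R}\times C_{2\R}) \cap D$, and translate the clean intersection hypothesis into cleanness of $\Phi$ via a dimension count.

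First I would parameterize $\Lambda_1 = C_1'$ and $\Lambda_2 = C_2'$ locally near a fixed real point by non-degenerate phase functions of positive type $\phi_1(x, y, \theta^1)$ and $\phi_2(y, z, \theta^2)$ with $\theta^j \in \R^{N_j} \setminus 0$. The candidate composite phase is
\[
    \Phi(x, z;\, y, \theta^1, \theta^2) \;=\; \phi_1(x, y, \theta^1) + \phi_2(y, z, \theta^2),
\]
with $(x,z)$ as base and $(y, \theta^1, \theta^2)$ as phase variables. Homogeneity of degree $1$ in the conic variables $(\theta^1, \theta^2)$, positivity $\Im \Phi \geq 0$, and $d\Phi \neq 0$ are immediate from the corresponding properties of $\phi_1, \phi_2$; as is standard in composition theorems, $y$ is a non-scaling phase variable contributing to the excess, which requires only a minor widening of \thref{defi.phasefun}. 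The equations $\partial_{\theta^j}\Phi = 0$ place $(x,y,\theta^1) \in C_{\phi_1\R}$ and $(y,z,\theta^2) \in C_{\phi_2\R}$, while $\partial_y\Phi = 0$ is the matching condition $\phi_1'_y = -\phi_2'_y$ that throws the resulting images into $(C_1\times C_2) \cap D$. This gives a smooth bijection $C_{\Phi\R} \leftrightarrow (C_{1\R}\times C_{2\R}) \cap D$.

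The heart of the proof is then a dimension count. By \thref{CleanInt}(1), the right-hand side is a smooth submanifold of dimension $(n_X + n_Y) + (n_Y + n_Z) - 2n_Y + e = n_X + n_Z + e$, so $\dim C_{\Phi\R} = n_X + n_Z + e$. Since $C_{\Phi\R}$ is carved out of an $(n_X + n_Y + n_Z + N_1 + N_2)$-dimensional space by the $n_Y + N_1 + N_2$ equations $\partial_{(y,\theta^1,\theta^2)}\Phi = 0$, the rank of the differential map $\sigma \mapsto \partial_\sigma \Phi$ at real critical points equals $M := n_Y + N_1 + N_2 - e$. Passing to almost analytic extensions, $M$ of the differentials $\{d(\partial\Phi/\partial\sigma_j)\}$ are linearly independent over $\C$ at real points on $C_\Phi$, which is precisely the defining condition for a clean phase function of excess $e$.

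Finally, $\Lambda_\Phi$ is the positive Lagrangian manifold associated to $\Phi$. Its real part, computed via the parametrization map $(x,z,\sigma)\mapsto(x,z,\partial_x\Phi,\partial_z\Phi)$, is the set $\{(x,\xi,z,\zeta) : \exists\,(y,\eta)\text{ with }(x,\xi,y,\eta)\in C_{1\R},\ (y,\eta,z,-\zeta)\in C_{2\R}\}$, which is exactly $(C_{1\R}\circ C_{2\R})'$. The injectivity and properness assumption \thref{CleanInt}(2) ensures that the projection forgetting $(y,\eta)$ is an embedding and that the composition is a closed submanifold; we then set $(C_1 \circ C_2)' := \Lambda_\Phi$. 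The main obstacle is the rank computation in the previous paragraph: formally this is the real-valued argument of Hörmander IV, Proposition 21.2.18, but one must carry it out coherently in the almost analytic framework of Section 2, preserving equivalence of almost analytic extensions and the positivity structure inherited from $\phi_1, \phi_2$.
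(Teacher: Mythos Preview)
Your proposal is correct and follows essentially the same strategy as the paper: build $\Phi=\phi_1+\phi_2$, identify $C_{\Phi\R}$ with $(C_{1\R}\times C_{2\R})\cap D$, and extract the excess $e$ from the clean-intersection hypothesis via a dimension/rank count. The only notable difference is that the paper fixes the canonical parameterizations $\phi_1=x\cdot\xi-y\cdot\eta+H_1(\xi,\eta)$ and $\phi_2=y\cdot\eta-z\cdot\zeta+H_2(\eta,\zeta)$ from the outset, so that the differentials $d(\partial\Phi/\partial\omega_j)$ can be written down explicitly and matched term-by-term with the defining equations of the intersection, and the homogeneity issue in the $y$-variable is handled by the concrete substitution $\omega=(y(\theta^2+\sigma^2)^{1/2},\theta,\sigma)$ rather than by widening \thref{defi.phasefun}.
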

 \begin{proof}
Since the $\Lambda_j=C'_j$, $j=1,2$, are almost Lagrangian manifolds, there are coordinates such that, in a neighborhood of real points $(x_0,\xi_0,y_0,-\eta_0)\in\Lambda_{1\R}=C_{1\R}'$ and $(y'_0,\eta'_0,z_0,\zeta_0)\in\Lambda_{2\R}=C_{2\R}'$, the manifolds are given by the vanishing of 
\[\Tilde{x}-\frac{\partial H_1}{\partial\Tilde{\xi}}(\Tilde{\xi},\Tilde{\eta}),\ \Tilde{y}+\frac{\partial H_1}{\partial\Tilde{\eta}}(\Tilde{\xi},\Tilde{\eta});\qquad \Tilde{y}'-\frac{\partial H_2}{\partial\Tilde{\eta}'}(\Tilde{\eta}',\Tilde{\zeta}),\ \Tilde{z}+\frac{\partial H_2}{\partial\Tilde{\zeta}}(\Tilde{\eta}',\Tilde{\zeta}). \]

The intersection $(C_1\times C_2)\cap \Tilde{D}$ is completely describe by the previous functions and 
\[\Tilde{y}=\Tilde{y}'\qquad \Tilde{\eta}=\Tilde{\eta}'.\]
Its tangent plane is given by the vanishing of the differentials of all these functions. Clean intersection means that $T(C_{1\R}\times C_{2\R})\cap TD$ is described by the equations 
\begin{equation}
    \begin{aligned}\label{defining differentials}
    &d\left(x-\frac{\partial H_1}{\partial\xi}(\xi,\eta)\right)=0, & d\left(y+\frac{\partial H_1}{\partial\eta}(\xi,\eta)\right)=0, && d(y-y')=0,\\
    &d\left(y'-\frac{\partial H_2}{\partial\eta'}(\eta',\zeta)\right)=0, & d\left(z+\frac{\partial H_2}{\partial\zeta}(\eta',\zeta)\right)=0, && d\left(\eta-\eta'\right)=0,
\end{aligned}
\end{equation}

and has dimension $n_X+n_Z+e$, where $e$ is the excess of the intersection. As in the transversal case, define $C_1\circ C_2$ as the manifold that satisfy $(C_1\circ C_2)_{\R}=(C_{1\R}\circ C_{2\R})$ where 
\begin{align*}
    C_{1\R}\circ C_{2\R}&=\left\{((x,y,\xi,\eta),(y',z,\eta',\zeta))\in C_{1\R}\times C_{2\R}\colon y=y',\ \eta+\eta'=0 \right\}
\end{align*}
can be identified with $(C_{1\R}\times C_{2\R})\cap D$. The main difference is that now  $\Lambda=(C_1\circ C_2)'$ is of dimension $n_X+n_Z+e$. Suppose now that $\Lambda_1$ and $\Lambda_2$ are parameterized  by the regular phase functions 
\[\phi_1(x,y,\xi,\eta)=x\cdot\xi-y\cdot\eta+H_1(\xi,\eta),\quad \phi_2(y,z,\eta,\zeta)=y\cdot\eta-z\cdot\zeta+H_2(\eta,\zeta).\]
The previous analysis shows that the function $$\Phi(x,z,\omega)=\phi_1(x,y,\xi,\eta)+\phi_2(y,z,\eta,\zeta),\qquad \omega=\omega(y,\xi,\eta,\zeta),$$ defines only a clean phase function with excess $e$, because the differentials $d(\partial \Phi/\partial \omega_j)$ are exactly those in  \eqref{defining differentials}. Then, the excess of the phase function $\Phi$ is $$\dim \Lambda-(n_X+n_Z)=e.$$

Finally, note that there is a one-to-one correspondence between $C_{1\R}\circ C_{2\R}$ and
\[ C_{\Phi\R}=\left\{((x,y,\xi,\eta),(y,z,\eta,\zeta))\in C_{\phi_1}\times C_{\phi_2}\colon \partial_y(\phi_1+\phi_2)=0 \right\},\]
so the manifold $\Lambda=(C_1\circ C_2)'$ can be parameterized by a the clean phase function $\Phi$.
 \end{proof}
 
We now present an extension of the composition theorem to the case of clean intersection. The proof is omitted because it follows the same arguments as Theorem 7.3 in \cite{melin1975fourier}, but using the canonical transformation given by \thref{CleanComp}. The order of the distribution follows from \thref{CleanPhaseDist}. 

To make our notation consistent with the one used in \cite{melin1975fourier}, from now on we put $\theta=(\xi,\eta)$ and $\sigma=(\eta,\zeta)$. Then, we consider $\Phi$ as a clean phase function depending on $(x,z,y,\theta,\sigma)$, that is
\begin{align}\label{newPhi}
\Phi(x,z,\omega)=\phi_1(x,y,\theta)+\phi_2(y,z,\sigma),\qquad \omega=\omega(y,\theta,\sigma).
\end{align}
We can consider, for instance, $\omega(y,\theta,\sigma)=\left(y(\theta^2+\sigma^2)^{1/2},\theta,\sigma\right)$.

\begin{thm}\thlabel{clean intersection}
Let $C_1\subset (T^*(X\times Y)\setminus 0)^\sim$, $C_2\subset (T^*(Y\times Z)\setminus 0)^\sim$ be positive canonical relations  satisfying \thref{CleanInt}. Suppose that $A_1\in I^{m_1}_{cl}(X\times Y, C_1';\Omega^{\frac{1}{2}})$ and $A_2\in I^{m_2}_{cl}(Y\times Z, C_2';\Omega^{\frac{1}{2}})$ are properly supported. Then $A_1\circ A_2\in I^{m}_{cl}(X\times Z, (C_1\circ C_2)';\Omega^{\frac{1}{2}})$, $m=m_1+m_2+e/2$, where $e$ is the excess of the intersection. 
\end{thm}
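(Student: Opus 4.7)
The plan is to mirror the argument for Theorem~7.3 of \cite{melin1975fourier} in the transverse case, modifying it at the two points where transversality is actually used. By \thref{CleanComp} the composition manifold $(C_1\circ C_2)'$ is parametrized by the function
\[\Phi(x,z,\omega)=\phi_1(x,y,\theta)+\phi_2(y,z,\sigma),\qquad \omega=\omega(y,\theta,\sigma),\]
which is only a clean phase function of excess $e$, not a non-degenerate one. The order of the resulting distribution is then read off from \thref{CleanPhaseDist}.

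First, I would localize. The properness of the projection $(C_{1\R}\times C_{2\R})\to (T^*X\setminus 0)\times(T^*Z\setminus 0)$ together with the compactness of the fibre $C_\gamma$ allow us to replace $A_1$ and $A_2$, modulo smoothing operators, by finite sums of pieces of the form $I(\phi_j,a_j)$ near the finitely many points $(\rho_1,\rho_2)\in(C_{1\R}\times C_{2\R})\cap D$ lying over each $\gamma\in(C_1\circ C_2)_\R$. Choosing the phase functions in the specific form used in the proof of \thref{CleanComp}, the kernel of the composition becomes, microlocally and up to a smoothing error,
\[K(x,z)=\iiint e^{i[\phi_1(x,y,\theta)+\phi_2(y,z,\sigma)]}\,a_1(x,y,\theta)\,a_2(y,z,\sigma)\,dy\,d\theta\,d\sigma,\]
the $y$-integration being justified because $A_1,A_2$ are properly supported.

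Next, I would package this as a single oscillatory integral with clean phase. Introducing $\omega=(y(\theta^2+\sigma^2)^{1/2},\theta,\sigma)$ gives the Jacobian $dy\,d\theta\,d\sigma=(\theta^2+\sigma^2)^{-n_Y/2}\,d\omega$, and
\[K(x,z)=\int e^{i\Phi(x,z,\omega)}\,a_1\,a_2\,(\theta^2+\sigma^2)^{-n_Y/2}\,d\omega,\]
with $\Phi$ the clean phase function provided by \thref{CleanComp}. The amplitude $b:=a_1\,a_2\,(\theta^2+\sigma^2)^{-n_Y/2}$ is a classical symbol in $\omega$, and applying \thref{CleanPhaseDist} to $I(\Phi,b)$ identifies $K$ as an element of $I^{m_1+m_2+e/2}_{\textup{cl}}(X\times Z,(C_1\circ C_2)';\Omega^{\frac{1}{2}})$, the extra $e/2$ appearing precisely because stationary phase is now applied to $e$ fewer variables than in the non-degenerate setting.

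The main obstacle I expect is not the identification of the phase, which is done once and for all by \thref{CleanComp}, but the analytical bookkeeping: verifying that the amplitude $b$ really lies in the symbol class to which \thref{CleanPhaseDist} applies, checking that the manipulations of the almost analytic extensions of $\phi_1,\phi_2$ and of the amplitudes produce only smoothing errors, and showing that the patching of the local pieces defines a global distribution independent of the choices of phase functions and cut-offs. These are precisely the technical steps of \cite[Theorem 7.3]{melin1975fourier} which carry over once the clean phase $\Phi$ has replaced the non-degenerate one.
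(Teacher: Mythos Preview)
Your proposal is correct and matches the paper's approach exactly: the paper omits the proof, stating only that it follows the arguments of \cite[Theorem~7.3]{melin1975fourier} with the canonical relation supplied by \thref{CleanComp} and the order read off from \thref{CleanPhaseDist}. Your outline is a faithful expansion of precisely that sketch, including the same change of variables $\omega=(y(\theta^2+\sigma^2)^{1/2},\theta,\sigma)$ and amplitude $b=a_1a_2(\theta^2+\sigma^2)^{-n_Y/2}$ that the paper records immediately after the theorem.
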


Assuming that $A_1$ and $A_2$ have local representations $I(\phi_1,a_1)$ and $I(\phi_2,a_2)$, respectively, one can see that $B=A_1\circ A_2$ is, modulo $\smooth$, locally given by $I(\Phi,b)$. Where $\Phi$ is the clean phase function in equation \eqref{newPhi}, and the amplitude $b\in  S_{cl}^{m+(n_X+n_Z-2N)/4}(X\times Z\times \R^N\setminus 0)$ is of the form 
\[b(x,z,\omega)=a_1(x,y,\theta)a_2(y,z,\sigma)(\theta^2+\sigma^2)^{-n_Y/2},\]
with $m=m_1+m_2$ and $N=N_1+N_2+n_Y$.
\begin{rem}
For the amplitude $b$, the estimates of the class $ S_{cl}^{m'}(X\times Z\times \R^N\setminus 0)$, are taken with respect to $(x,z,\omega)$, with $\omega$ the homogeneous function above.
\end{rem}

 It is then clear that we should be able to write the principal symbol of $B$ in terms of $\sigma(A_1)$ and $\sigma(A_2)$. The next theorem shows precisely this in the case of transverse composition.  

\begin{thm}\thlabel{PScomp}
Let $A_1\in I^{m_1}_{cl}(X\times Y, C_1';\Omega^{\frac{1}{2}})$ and $A_2\in I^{m_2}_{cl}(Y\times Z, C_2';\Omega^{\frac{1}{2}})$ satisfy \thref{CleanInt} with excess $e=0$, and denote by $B$ the composition $A_1\circ A_2\in I^{m_1+m_2}_{cl}(X\times Z, (C_1\circ C_2)';\Omega^{\frac{1}{2}})$. Then, 
\begin{align}\label{PScomp-eq1}
\sigma(B)\sim(a_1)_0(a_2)_0(\theta^2+\sigma^2)^{\frac{-n_Y}{2}} \sqrt{d\Phi}\in\Gamma^{m+n/4}(\Lambda,\mathscr{L}),
\end{align}
with $m=m_1+m_2$ and $n=n_X+n_Z.$ Here  $(a_1)_0$, $(a_2)_0$ are the principal parts of the amplitudes of $A_1$ and $A_2$, respectively.
\end{thm}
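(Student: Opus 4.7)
The strategy is to apply \thref{PS2} directly to the local representation of $B$ furnished by \thref{clean intersection}. First I would recall that, modulo $\smooth$, the composition $B=A_1\circ A_2$ is microlocally of the form $I(\Phi,b)$ with
\[
\Phi(x,z,\omega)=\phi_1(x,y,\theta)+\phi_2(y,z,\sigma),\qquad b(x,z,\omega)=a_1(x,y,\theta)\,a_2(y,z,\sigma)\,(\theta^2+\sigma^2)^{-n_Y/2},
\]
as established just before the statement. Under the transversality hypothesis $e=0$, \thref{CleanComp} guarantees that $\Phi$ is a non-degenerate (regular) phase function parametrizing $\Lambda=(C_1\circ C_2)'$, so the hypotheses of \thref{PS2} are satisfied with this local representation.

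Next I would identify the leading term of the classical asymptotic expansion of $b$ regarded as an element of $S^{m+(n_X+n_Z-2N)/4}_{\textup{cl}}(X\times Z\times(\R^N\setminus 0))$, where $N=N_1+N_2+n_Y$. Since $a_1\in S^{m_1+(n_X+n_Y-2N_1)/4}_{\textup{cl}}$ and $a_2\in S^{m_2+(n_Y+n_Z-2N_2)/4}_{\textup{cl}}$ are classical, and the factor $(\theta^2+\sigma^2)^{-n_Y/2}$ is purely homogeneous, the product is classical with top-order term equal to the product of the top-order terms,
\[
b_0(x,z,\omega)=(a_1)_0(x,y,\theta)\,(a_2)_0(y,z,\sigma)\,(\theta^2+\sigma^2)^{-n_Y/2}.
\]
A brief check of the orders confirms that $b_0$ is homogeneous of the required degree $m+(n_X+n_Z-2N)/4$ with $m=m_1+m_2$, consistent with $B\in I^m_{\textup{cl}}$ in the transverse case.

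Finally, \thref{PS2} applied to $I(\Phi,b)$ gives $\sigma^m(B)\sim\widetilde{b_0}\,\sqrt{d\Phi}$, and substituting the expression for $b_0$ yields exactly \eqref{PScomp-eq1}. The construction of $\sqrt{d\Phi}\in\Gamma^{N/2}(\Lambda,\mathscr{L})$ is the one given by \thref{Dphi} applied to the regular phase $\Phi$, so no modification of that section is needed in the transverse setting.

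The only delicate point I anticipate is the verification that the product amplitude $b$ remains a classical symbol \emph{in the variable $\omega=\omega(y,\theta,\sigma)$}, as opposed to in the original split variables $(y,\theta,\sigma)$. This is handled by the remark following \thref{clean intersection}, where the estimates defining the symbol class are taken precisely with respect to $\omega$; once that bookkeeping is fixed, the homogeneity of the leading term and the invariance of $\widetilde{b_0}\,\sqrt{d\Phi}$ under the choice of almost analytic extensions follow from the arguments already used in the proof of \thref{PS2}.
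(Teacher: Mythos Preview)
Your proof is correct and matches the paper's opening move: apply \thref{PS2} to the local representation $B=I(\Phi,b)$, using that $\Phi$ is non-degenerate when $e=0$ (by \thref{CleanComp}) and that $b_0=(a_1)_0(a_2)_0(\theta^2+\sigma^2)^{-n_Y/2}$. The paper then goes further and cross-checks this formula against the composition of principal symbols in \cite{melin1975fourier}, Theorem~7.5: it squares the sections, invokes the identity $d\phi_1\wedge d\phi_2\sim\pm(\theta^2+\sigma^2)^{-n_Y}\,d\Phi\wedge\Omega$ together with $\alpha_1^2\wedge\alpha_2^2\sim\pm\alpha^2\wedge\Omega$, and recovers $\alpha\sim b_0\sqrt{d\Phi}$ independently. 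That extra step is a consistency verification rather than a logical necessity once \thref{PS2} and \thref{Dphi} are in hand, so your shorter argument is complete; the paper's route has the added benefit of making explicit how $\sqrt{d\Phi}$ on $(C_1\circ C_2)'$ is tied to $\sqrt{d\phi_1}$ and $\sqrt{d\phi_2}$ on the factors.
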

\begin{proof}
Since $e=0$, the intersection is transverse and $\Phi$ is a non-degenerate phase function. Then $\sqrt{d\Phi}$ is defined according to \thref{Dphi}. The definition of the principal symbol applied to $B=A_1\circ A_2$, tells us that
 \begin{align}\label{sigmaB}
     \sigma^{m}(B) \sim \widetilde{b_0}\sqrt{d\Phi},
 \end{align}
  with $b=(a_1)(a_2)(\theta^2+\sigma^2)^{\frac{-n_Y}{2}}$ the amplitude of $B$. The difficulty of the proof lies in verifying that $\sqrt{d\Phi}$ defines a section in $\Gamma(\mathscr{L},\Lambda)$, for $\Lambda=(C_1\circ C_2)'$. Luckily, all the necessary steps were proved in \cite{melin1975fourier}. First of all, notice that in general, the square of a section $\sigma\in\Gamma(\mathscr{L},\Lambda)$ defines, up to a sing, an almost analytic form on $\Lambda$ of maximal degree. That is \[\sigma^2\sim\pm\omega, \quad \text{for some}\ n\text{-form}\ \omega.\] 
By construction (see \thref{Dphi}) we know that, given a phase function $\phi$, there is a $n$-form $\omega=d\phi$, moreover 
\begin{align}\label{signo}
    \sigma^2\sim\pm d\phi, \quad \text{for}\ \sigma=\sqrt{d\phi}.
\end{align}
On the other hand, the proof of Theorem 7.5 in \cite{melin1975fourier} states that 
\[d\phi_1\land d\phi_2\sim \pm (\theta^2+\sigma^2)^{-n_Y} d\Phi\land \Omega\] defines an almost analytic form on $\Lambda=(C_1\circ C_2)'$, and that $\alpha_1^2\land\alpha_2^2\sim\pm \alpha^2\land\Omega$, for some $\Omega$. Here  $\alpha_1,\alpha_2,\alpha$ are the principal symbols of $A_1,\ A_2,$ and $B$, respectively. Combing these two facts with the expressions for $\alpha_1$, $\alpha_2$ according to \thref{PS2}, we get
 \begin{align*}
     \alpha_1^2\land\alpha_2^2&\sim \pm ( (a_1)^2_0\ d\phi_1)\land ( (a_2)^2_0\ d\phi_2) \sim \pm ((a_1)_0(a_2)_0)^2\ d\phi_1\land d\phi_2\\
     &\sim\pm\left((a_1)_0(a_2)_0(\theta^2+\sigma^2)^{-\frac{n_Y}{2}}\right)^2\ d\Phi\land \Omega,
 \end{align*}
Then, $\alpha^2\sim \pm b_0^2d\Phi$. From equation \eqref{signo}, we see that $\alpha\sim b_0 \sqrt{d\Phi}$. The result follow from equation \eqref{sigmaB} and the definition of $b_0$.
\end{proof}

Whenever \thref{CleanInt} is satisfied with $e>0$, the principal symbol of the resulting operator is given as in \thref{ps-clean-rem},  with $\sqrt{d\Phi}$ is defined according to \thref{Dphi-clean}. 
But, to compute this we need further assumptions. We take advantage of the identification between $C_1\circ C_2$ and $(C_1\times C_2)\cap \Tilde{D}$. Denote by $C$ the positive canonical relation $C_1\circ C_2\subseteq (T^*X\setminus 0)^\sim \times (T^*Z\setminus 0)^\sim.$
Then, the image $C_\gamma$ of a point $\gamma\in C_\R$ in $(C_{1\R}\times C_{2\R})\cap D$,  defines a fiber of dimension $e$ over $\gamma$.  We can now prove our main result, \thref{main}.

\begin{thm}
Let $A_1\in I^{m_1}_{\textup{cl}}(X\times Y, C_1;\Omega^{\frac{1}{2}})$ and $A_2\in I^{m_2}_{\textup{cl}}(Y\times Z, C_2;\Omega^{\frac{1}{2}})$ satisfy the assumptions of \thref{clean intersection}, and suppose that, for $\gamma\in C_\R$, the set $C_\gamma$ is compact. Then, the principal symbol of 
$B=A_1\circ A_2\in I^{m+e/2}_{\textup{cl}}(X\times Z, C;\Omega^{\frac{1}{2}})$, $m=m_1+m_2$, is 
\begin{align}\label{PScomp-eq2}
\sigma^{m+e/2}(B)\sim\int_{C_\gamma} \left((a_1)_0(a_2)_0(\theta^2+\sigma^2)^{\frac{-n_Y}{2}} \sqrt{d\Phi}\right)\ dy''d\theta''d\sigma''\in S^{(m-e/2+n/4)}(\Lambda,\mathscr{L}),
\end{align}
with $n=n_X+n_Z$ and $\sqrt{d\Phi}$ defined as in \thref{Dphi-clean}. 
\end{thm}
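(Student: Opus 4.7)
The plan is to combine the local representation of $B=A_1\circ A_2$ as an oscillatory integral with clean phase, furnished by \thref{CleanComp} and the discussion preceding it, with the principal symbol formula for clean phase distributions given in \thref{ps-clean-rem}. The transverse case in \thref{PScomp} will serve both as a template and as a sanity check via the limit $e=0$.

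First, invoking \thref{CleanComp} and the remark following \thref{clean intersection}, write $B$ microlocally near a real point $\gamma\in C_\R$ as
\[B\equiv I(\Phi,b)\quad\text{mod }\smooth,\]
where $\Phi(x,z,\omega)=\phi_1(x,y,\theta)+\phi_2(y,z,\sigma)$ with $\omega=\omega(y,\theta,\sigma)$ is a clean phase function of excess $e$ that parametrizes $\Lambda=(C_1\circ C_2)'$, and
\[b(x,z,\omega)=a_1(x,y,\theta)a_2(y,z,\sigma)(\theta^2+\sigma^2)^{-n_Y/2}\in S^{m+(n_X+n_Z-2N)/4}_{\textup{cl}}.\]
Observe that the leading term of the asymptotic expansion of $b$ in the homogeneous variable $\omega$ is $b_0=(a_1)_0(a_2)_0(\theta^2+\sigma^2)^{-n_Y/2}$, where $(a_j)_0$ is the top-order term of $a_j$.

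Next I would apply \thref{ps-clean-rem} directly to $I(\Phi,b)$. Since the order of $B$ is $m+e/2$ with $m=m_1+m_2$, this definition gives
\[\sigma^{m+e/2}(B)\sim\int_{C_{\omega\R}}\widetilde{b_0}\,\sqrt{d\Phi}\ d\omega''\in S^{(m-e/2+n/4)}(\Lambda,\mathscr{L}),\]
where $\omega''$ are the excess variables in the splitting $\omega=(\omega',\omega'')$ provided by the clean phase assumption, and $\sqrt{d\Phi}$ is the section defined in \thref{Dphi-clean}. Splitting the excess variables as $\omega''=(y'',\theta'',\sigma'')$ in agreement with the parametrization $\omega(y,\theta,\sigma)$ then yields the form of the integrand in the statement.

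The final task is to identify the fiber $C_{\omega\R}$ of integration with the fiber $C_\gamma$ over $\gamma\in C_\R$ inside $(C_{1\R}\times C_{2\R})\cap D$. This follows from the one-to-one correspondence, already noted at the end of the proof of \thref{CleanComp}, between $C_{\Phi\R}$ and $C_{1\R}\circ C_{2\R}\simeq(C_{1\R}\times C_{2\R})\cap D$: the projection $C_{\Phi\R}\to\Lambda_\R$ induced by the map \eqref{map-CLambda} has fibers of real dimension $e$ that correspond precisely to the fibers $C_\gamma$. Compactness of $C_\gamma$ by hypothesis guarantees that the integral in \thref{ps-clean-rem} is well-defined, and the change of fiber coordinates produces the stated measure $dy''d\theta''d\sigma''$. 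As a consistency check, when $e=0$ the fiber $C_\gamma$ reduces to a point, the integral collapses, and the formula recovers \eqref{PScomp-eq1} of \thref{PScomp}.

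The main technical obstacle is the verification that the excess splitting arising from the clean phase function $\Phi$ matches geometrically with the fibration $C\to$ base via $\gamma\mapsto C_\gamma$, so that the integrand $\widetilde{b_0}\sqrt{d\Phi}$—an element of $S^{(m-e+n/4)}(\Lambda,\mathscr{L})$ by \thref{Dphi-clean} and \thref{PS2} applied to the inner integral—produces, after fiber integration, a section whose homogeneity degree is precisely $m-e/2+n/4$. This last homogeneity bookkeeping uses that each of the $e$ fiber directions contributes one factor of homogeneity, shifting the degree by $+e/2$ relative to the pointwise integrand, which matches the shift from $(m-e+n/4)$ to $(m-e/2+n/4)$ in the target space.
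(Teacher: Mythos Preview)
Your proposal is correct and follows essentially the same route as the paper: write $B$ locally as $I(\Phi,b)$ with the clean phase $\Phi=\phi_1+\phi_2$ and amplitude $b=a_1a_2(\theta^2+\sigma^2)^{-n_Y/2}$, then apply \thref{ps-clean-rem} and identify the fiber of integration with $C_\gamma$ via the correspondence in \thref{CleanComp}. The paper's proof is more compressed and phrases the argument as ``a direct consequence of \thref{PScomp} and \thref{ps-clean-rem}'', using \thref{PScomp} mainly to certify that the integrand $(a_1)_0(a_2)_0(\theta^2+\sigma^2)^{-n_Y/2}\sqrt{d\Phi}$ is a legitimate section of $\mathscr{L}$, whereas you obtain this directly from $b_0$ and \thref{Dphi-clean}; the paper then does the same homogeneity bookkeeping and the same reorganization of the excess variables as $\omega''=(y'',\theta'',\sigma'')$ that you carry out.
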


\begin{proof}
This is a direct consequence of \thref{PScomp} and \thref{ps-clean-rem}. We only need to compute the order of homogeneity. Since $\sqrt{d\Phi}$ is defined according to \thref{Dphi-clean}, it is homogeneous of degree $(N-e)/2$, with $N=n_Y+N_1+N_2$ and $e$ the excess of the intersection. Then, \[(a_1)_0(a_2)_0(\theta^2+\sigma^2)^{\frac{-n_Y}{2}} \sqrt{d\Phi}\in S^{(m'')}(\Lambda,\mathscr{L})\]
for $m''=m'+(N-e)/2=m_1+m_2-e/2+(n_X+n_Z)/4$. Equation \eqref{PScomp-eq2} follows after integration with respect to the excess variables $\omega''=\omega''(y,\theta,\sigma)$. Note that, it is possible to organize the variables in a way that $\omega''=(y'',\theta'',\sigma'')\in\R^e$, for some splitting $y=(y',y'')$, $\theta=(\theta',\theta'')$ and $\sigma=(\sigma',\sigma'')$.
\end{proof}

\begin{rem}
 \thref{CleanComp} implies that $\Lambda\sim (C_1\circ C_2)'$ is a positive Lagrangian manifold. As such, it can be parametrize by a non-degenerate phase function. This re-parametrization could potentially save us the difficulty of working with the excess variables. But, we need to keep in mind that the amplitude in the local representation will also change, which would  makes the formula \eqref{PScomp-eq2} no longer be valid. Instead, we would obtain a section of $\mathscr{L}$ which is equivalent to the representation of $\sigma^{m+e/2}(B)$ presented above.
\end{rem}

\textit{Acknowledgements.} The content of this paper is part of the author doctoral thesis. The work was funded by the DAAD  and supported by the activities of the RTG 2491: Fourier Analysis and Spectral Theory.  A special thanks goes to Ingo Witt for suggesting the topic and for the subsequent discussions and advice.

\printbibliography
\end{document}